\newcolumntype{x}[1]{>{\centering\arraybackslash}p{#1}}
\DeclareMathOperator{\sinc}{sinc}
\algnewcommand\algorithmicinput{\textbf{Input:}}
\algnewcommand\INPUT{\item[\algorithmicinput]}
\algnewcommand\algorithmicoutput{\textbf{Output:}}
\algnewcommand\OUTPUT{\item[\algorithmicoutput]}
\algnewcommand\algorithmicoptional{\textbf{Optional:}}
\algnewcommand\OPTIONAL{\item[\algorithmicoptional]}
\newcommand{\bb}{\mathbf{b}}
\newcommand{\bF}{\mathbf{F}}
\newcommand{\bg}{\mathbf{g}}
\newcommand{\bH}{\mathbf{H}}
\newcommand{\bK}{\mathbf{K}}
\newcommand{\bP}{\mathbf{P}}
\newcommand{\bT}{\mathbf{T}}
\newcommand{\bu}{\mathbf{u}}
\newcommand{\bv}{\mathbf{v}}
\newcommand{\bw}{\mathbf{w}}
\newcommand{\bX}{\mathbf{X}}
\newcommand{\bx}{\mathbf{x}}
\newcommand{\bPhi}{\boldsymbol{\Phi}}
\newcommand{\bSigma}{\boldsymbol{\Sigma}}
\definecolor{blue}{rgb}{0,0,1}
\definecolor{darkgreen}{rgb}{0,0.5,0}
\definecolor{red}{rgb}{1,0,0}
\definecolor{teal}{rgb}{0,0.5,0.7}
\newtheorem{remark}{Remark}
\newtheorem{theorem}{Theorem}
\newtheorem{corollary}{Corollary}
\newcommand{\boundellipse}[3]
{(#1) ellipse (#2 and #3)
}
\title{\Large\textbf{Time-Delay Observables for Koopman: Theory and Applications}}
\author{Mason Kamb$^{\dag}$, Eurika Kaiser$^{*}$, Steven L. Brunton$^{*}$, J. Nathan Kutz$^{\dag}$ \\~\\
\small{$^\dag$ Department of Applied Mathematics, University of Washington, Seattle, WA 98195,
United States}\\
\small{$^*$ Department of Mechanical Engineering, University of Washington, Seattle, WA 98195, United States}}
\begin{document}
\maketitle



\begin{abstract}

Nonlinear dynamical systems are ubiquitous in science and engineering, yet analysis and prediction of these systems remains a challenge. Koopman operator theory circumvents some of these issues by considering the dynamics in the space of observable functions on the state, in which the dynamics are intrinsically linear and thus amenable to standard techniques from numerical analysis and linear algebra. However, practical issues remain with this approach, as the space of observables is infinite-dimensional and selecting a subspace of functions in which to accurately represent the system is a nontrivial task. In this work we consider time-delay observables to represent nonlinear dynamics in the Koopman operator framework. We prove the surprising result that Koopman operators for different systems admit universal (system-independent) representations in these coordinates, and give analytic expressions for these representations. In addition, we show that for certain systems a restricted class of these observables form an optimal finite-dimensional basis for representing the Koopman operator, and that the analytic representation of the Koopman operator in these coordinates coincides with results computed by the dynamic mode decomposition. We provide numerical examples to complement our results. In addition to being theoretically interesting, these results have implications for a number of linearization algorithms for dynamical systems.

\end{abstract}

\section{Introduction}
Dynamical systems are ubiquitous in science and engineering.   While linear systems are well-characterized, understanding nonlinear systems remains a open challenge. Nonlinear systems do not satisfy the linear superposition principle, can exhibit an extremely wide range of behaviors, including chaos, and do not generally admit analytic solutions.  Koopman operator theory is an emerging framework for analyzing such systems. In this framework, the finite-dimensional nonlinear state space dynamics are transformed to an infinite-dimensional linear dynamical system in a Hilbert space of functions on the state, which is encoded in the Koopman operator \cite{koopman1931,koopmanvonneumann1932,Mezic2005nd}. The eigenfunctions of the Koopman operator provide a set of coordinates in which the dynamics of the system appear globally linear. While this framework is theoretically powerful, finding these eigenfunctions is a challenging problem which lacks principled approaches.
 The current leading algorithm for computing these eigenfunctions, the {\em dynamic mode decomposition} (DMD) \cite{Schmid2010jfm, rowleymezicetal2009} and its extension to nonlinear observables, using either judiciously selected variables~\cite{PDEKoopman_kutzetal2018complexity} or the {\em extended} DMD (EDMD) algorithm \cite{williamskevrekidisrowley2015}, require the a priori selection of a Koopman-invariant subspace in order to work effectively. If the observables are poorly selected, the resultant approximation to the Koopman operator and its eigenfunctions can be quite poor. In this work, we provide a general and computationally efficient approach to obtaining effective observable functions, which is based on a variant of delay embedding.

Delay embedding is a classical approach to augmenting the information contained in the system state by augmenting it with measurements of the state history. Takens' seminal embedding theorem establishes that under certain technical conditions, delay embedding a signal coordinate of the system can reconstruct the attractor of the original system, up to a diffeomorphism \cite{takens1981}. Delay embedding methods have also been employed for system identification, most notably by the eigensystem realization algorithm (ERA) \cite{juangpappa1985}. An additional variant to delay embedding was introduced by Broomhead and King in \cite{broomheadking1986}, which projects the delay embedded measurements onto the eigenvectors of the autocorrelation function. As these components tend to yield more information and are more robust to noise than generic embedding analysis, this technique has since been widely adopted in a variety of fields, notably as singular spectrum analysis (SSA) \cite{vautardghil1989}.

More recently, delay embedding has shown promise as a technique for computing Koopman eigenfunctions from data, both in regimes where only partial state information is available \cite{bruntonproctorkaiserkutz2017}, as well as in regimes where full state information is available but more functions are needed to span a Koopman-invariant subspace \cite{leclainchevega2017a}. Brunton et. al.~\cite{bruntonproctorkaiserkutz2017} developed a variant of this technique, called the {\em Hankel Alternative View of Koopman} (HAVOK) analysis, which studied the linear dynamics of the projections of time-delay embedded systems onto the singular vectors of a Hankel matrix of the signal. They found the striking empirical result that even for systems where the assumption of Koopman-invariance does not hold, such as chaotic systems, the dynamics of these coordinates was nearly linear, and could be closed by including the action of a single, high-order forcing term. Mezic and Arbabi~\cite{arbabimezic2017} later proved the convergence of these methods for ergodic systems under the assumption that the time-delay subspace was Koopman-invariant.

In the present work, we consider a generalization of HAVOK wherein the dynamics of the system are embedded in {\em convolutional coordinates}. These coordinates are given by the projections of time-delay coordinates onto a generic, infinite, orthonormal basis. We prove the striking result that the dynamics of these coordinates are linear and, for a given choice of basis, independent of the underlying system. The proof is straightforward, and to our knowledge has not been reported in the literature. Although this result is exact in the infinite-dimensional case, finite-dimensional truncations of these dynamics are generally poor at approximating the dynamics of the underlying system. We instead advocate for the use of a basis computed via a Singular Value Decomposition (SVD) of the trajectory data. We show that these coordinates have the following advantageous properties:
\begin{enumerate}
    \item  The analytically computed linear dynamics on the SVD basis match those estimated via a DMD-type algorithm.
    \item For a nonlinear system that admits a Koopman mode expansion of order $N$, the dynamics of the first $N$ convolutional coordinates exactly encode the dynamics of the system, and the associated Koopman eigenfunctions will be in the span of these coordinates. We know of no other family of observable functions that has a similar guarantee.
    \item The SVD basis is also the basis of eigenvectors of the autocorrelation function of the signal. The intrinsic dynamical properties of these convolutional coordinates enable a dramatically faster DMD computation for this set of observables.
\end{enumerate}
We prove and conjecture some results relating to the nature and quality of these approximations to the Koopman operator in a general setting. We compute the discrete approximations of these coordinates for a number of example systems and the associated Koopman spectrum and eigenfunctions. We observe that our methods generally work well and provide superior or comparable Koopman approximations to other families of observables. A few pathological cases are identified for which our method does not perform as expected. In these cases, the eigenvalues of the system are nearly degenerate and the delay embedding window is small. 

\section{Background}
\subsection{Koopman Theory}
We consider an autonomous dynamical system on $\mathbb{R}^n$ of the form
\begin{equation}\label{eq:defnonlin}
    \frac{d}{dt} \mathbf{x}(t) = \mathbf{f}(\mathbf{x}(t))
\end{equation}
where $\mathbf{x} \in \mathbb{R}^n$ represents the state of the system and $\mathbf{f}:\mathbb{R}^n\rightarrow\mathbb{R}^n$ is a possibly nonlinear vector field. Here, $t$ denotes time. For all applications in this paper, we will assume that $\mathbf{f}$ is at least continuous so that the trajectories $\mathbf{x}(t)$ are at least $\mathcal{C}^1$. The system (\ref{eq:defnonlin}) induces a flow map $\bF_t$, specifically a time-dependent family of maps $\{ \bF_t\}_{t\in\mathbb{R}_{+}}$, satisfying
\begin{equation}
    \mathbf{x}(t) = \bF_t(\mathbf{x}_0),
\end{equation}
which yields the state $\mathbf{x}(t)$ at time $t$ given an initial state ${\bf x}_0:={\bf x}(0)$.

B.~O. Koopman and J.~v. Neumann introduced an influential operator-theoretic perspective on dynamical systems of the above form \cite{koopman1931,koopmanvonneumann1932}. Their fundamental insight was that the finite-dimensional nonlinear dynamics of (\ref{eq:defnonlin}) can be transformed to an infinite-dimensional, linear dynamical system by considering an appropriately chosen Hilbert space of scalar observable functions $g:\mathbb{R}^n \to \mathbb{C}$ on the state, instead of the state directly. 
The one-parameter semigroup of \textit{Koopman Operators}, $\{\mathcal{K}_t\}_{t\in\mathbb{R}_{+}}$, acting on observables $g$ is defined by
\begin{equation}\label{eq:defkoopmanflow}
    \mathcal{K}_t g = g \circ \bF_t,
\end{equation}
where $\circ$ denotes the composition operator so that
\begin{equation}
	\mathcal{K}_t g({\bf x}_0) = g(\bF_t({\bf x}_0)) = g({\bf x}(t)).
\end{equation}
$\mathcal{K}_t$ maps the measurement function $g$ to the values it will take after the dynamics have progressed for a time $t$. The generator of the family of Koopman operators is defined by 

\begin{equation}\label{eq:defkoopmangen}
    \mathcal{K} g = \frac{d}{dt} g \circ \textbf{F}_t = \lim_{t \to 0^+} \frac{1}{t}(\mathcal{K}_t g - g)
\end{equation}
The second equality holds in an $L^2$ sense when $\textbf{f}$ is smooth and globally Lipshitz \cite{Lasota1994book,Mezic2017book}. For a system that is not globally Lipshitz, one may consider a reduced compact domain $\Omega$; for $L^2$ functions with compact support on this domain, the second equation holds in an $L^2$ sense, even if the dynamics are not globally Lipshitz. More generally, for a continuous observable function, this limit will hold pointwise, rather than in $L^2$ sense.

These operators are linear by construction and reflect the dynamics of the system in the space of observable functions. This framework allows us to transform the possibly complex, nonlinear dynamics in the finite-dimensional state space for infinite-dimensional linear dynamics in the space of observable functions on the state. While this framework is theoretically appealing, it is difficult to work directly with the infinite-dimensional dynamics of (\ref{eq:defkoopmanflow}). An alternative approach is to perform a spectral decomposition of the Koopman Operator \cite{Mezic2005nd}. 
This approach seeks the eigenvalues $\lambda\in\mathbb{C}$ and eigenfunctions $\varphi:\mathbb{R}^n\rightarrow\mathbb{C}$ of the Koopman operator, satisfying:
\begin{equation}\label{eq:koopmaneigen}
	\mathcal{K}_t \varphi(\bx_0) = \varphi(\bF_t(\bx_0)) = \varphi(\bx_0) e^{\lambda t}.
\end{equation}
Each eigenfunction can be considered as a special type of observable, which behaves linearly in time.
Thus, the eigenspace of each eigenfunction $\varphi$ is Koopman invariant by construction, so these functions provide coordinates in which the Koopman operator is naturally finite-dimensional. Another use of the Koopman eigenfunctions is as a basis for the space of observables. If the Koopman operator family $\mathcal{K}_t$ has a pure point spectrum, we can expand the dynamics of any vector-valued observable function $\mathbf{g}:\mathbb{R}^n\to\mathbb{C}^d$ in a basis of these eigenfunctions:
\begin{equation}\label{eq:KMD}
    \mathbf{g}(\mathbf{x}(t)) = \mathcal{K}_t\mathbf{g}(\mathbf{x}_0) = \sum_{j=0}^{\infty} \mathbf{c}_j \varphi_j(\mathbf{x}_0) e^{\lambda_j t},
\end{equation}
In the special case where the Koopman operator is unitary, the eigenfunctions $\varphi_j$ are orthogonal, and these coefficients satisfy $\textbf{c}_j = \langle \varphi_j, \textbf{g} \rangle$. This decomposition is known as the {\em Koopman mode decomposition} \cite{Mezic2005nd}. This decomposition illustrates a powerful application of Koopman eigenfunctions. If a complete basis of these eigenfunctions exists, then the dynamics of any set of observables can be solved exactly and linearly using the expansion above. 

More generally, for systems with an invariant measure $\mu$, the action of the Koopman operator on the Hilbert space $L^2(\mu)$ is unitary, and can be decomposed into the following form:
\begin{equation}
    \mathcal{K}_t \textbf{g} = \sum_{j = 0}^\infty \textbf{c}_j \phi_j(\textbf{x}_0) e^{2\pi i \omega_j t} + \int dE(\omega) e^{2\pi i \omega} 
\end{equation}
where $dE(\omega)$ is the spectral measure associated with the continuous component of the spectrum of $\mathcal{K}_t$.

\subsection{Dynamic Mode Decomposition}
While the Koopman operator-theoretic perspective on dynamical systems dates back to the 1930s, it has only been in recent years that these methods have gained widespread attention. One reason for this is the variety of computational tools and algorithms that have been developed to aid the study of the Koopman operator and its spectral properties \cite{Mezic2005nd,Mezic2005nd,Budivsic2012chaos,Lan2013physd,Mezic2017book}. One such class of methods is known as dynamic mode decomposition (DMD)~\cite{Schmid2010jfm,rowleymezicetal2009,Tu2014jcd,Kutz2016book}. DMD was originally introduced in the fluid dynamics community as a tool for extracting spatiotemporal coherent structures from complex flows \cite{Schmid2010jfm}. It was later shown that the spatiotemporal modes obtained by DMD converge to the Koopman modes for a set of linear observables \cite{rowleymezicetal2009,Tu2014jcd}. This property of the DMD has made it one of the primary computational tools in Koopman Theory.

The exact DMD algorithm works as follows~\cite{Tu2014jcd}. A snapshot matrix of state measurements $\mathbf{X}$ is assembled. The algorithm estimates a linear relationship between the data matrices
\begin{align*}
    \mathbf{X}_1 &= \begin{pmatrix}
    \vert&\vert&&\vert\\
    \mathbf{x}_1 & \mathbf{x}_2 & \cdots & \mathbf{x}_{M-1}\\
    \vert&\vert&&\vert
    \end{pmatrix}
    & \mathbf{X}_2 = \begin{pmatrix}
    \vert&\vert&&\vert\\
    \mathbf{x}_2 & \mathbf{x}_3 & \cdots & \mathbf{x}_M\\
    \vert&\vert&&\vert
    \end{pmatrix}.
\end{align*}
where $\mathbf{x}_j = \mathbf{x}(t_j)$ denotes the $j$th timestep at discrete time $t_j$ and $\Delta t=t_{j+1}-t_j$ is the timestep between snapshots. The algorithm estimates the propagator matrix
$\mathbf{K} \in \mathbb{R}^{n\times n}$ that satisfies
\begin{equation}
	\bX_2 \approx \bK \bX_1
\end{equation}
The optimal $\mathbf{K}$ is found by solving the optimization problem
\begin{equation}
    \mathbf{K} = \arg\min\limits_{\hat{\mathbf{K}}} \Vert \hat{\mathbf{K}}\mathbf{X}_1 - \mathbf{X}_2 \Vert_F
\end{equation}
where $|| \cdot||_F$ denotes the Frobenius norm defined as
    $\Vert \mathbf{X} \Vert_F = \sqrt{\sum_{j = 1}^n\sum_{k = 1}^M X_{jk}^2}$.
The least-squares solution to this optimization problem is known to be
   $\mathbf{K} = \mathbf{X}_2 \mathbf{X}_1^{\dagger}$
where $\dagger$ denotes the Moore-Penrose pseudoinverse. Once an estimate for $\mathbf{K}$ has been produced through this procedure, we can compute the eigendecomposition of $\mathbf{K}$:
\begin{equation}
    \mathbf{K} = \mathbf{\Phi}\pmb{\Lambda}\mathbf{\Phi}^{-1}.
\end{equation}
The columns $\pmb{\phi}_j\in\mathbb{R}^n$ of $\mathbf{\Phi}$ are called the DMD {\em modes} of $\mathbf{K}$. While this optimization problem is typically solved using a naive least-squares solution, there exist alternative algorithms which produce a less biased set of DMD modes and eigenvalues.  For details,  see Askham and Kutz~\cite{askham2018variable} for a comparison of existing techniques.

Using this eigendecomposition, we can predict the future solution approximately in the following form:
\begin{equation}
\bx_n \approx \sum_{j=1}^N b_j \pmb{\phi}_j \lambda_j^n
\end{equation}
where $b_j$'s are the coordinates of the  initial state $\bx_1$ in the Koopman mode basis so that $\bx_1 = \bPhi\bb$, i.e.\ initial amplitude of each mode $\pmb{\phi}_j$.
In continuous-time, we obtain:
\begin{equation}
    \bx(t) \approx \sum_{j=1}^N b_j \pmb{\phi}_j e^{\omega_j t}
\end{equation}
with the continuous-time eigenvalue $\omega_j=\ln(\lambda_j)/\Delta t$.
Here, $\pmb{\phi}_j$ and $\lambda_j$ are the $j$th eigenvector-eigenvalue pair of $\mathbf{K}$. 


In practice, this eigendecomposition may be prohibitively expensive to compute if the dimension of $\mathbf{K}$ is large. This issue can be circumvented by computing the {\em singular value decomposition} (SVD) of $\mathbf{X}_1 = \mathbf{U}\mathbf{\Sigma}\mathbf{V}^*$ and computing the $r\times r$ truncated matrix $\tilde{K}$:
\begin{equation}
\tilde{\mathbf{K}} = \mathbf{U}_r \mathbf{K} \mathbf{U}_r^*
\end{equation}
where $\mathbf{U}_r$ consists of the first $r$ columns of $\mathbf{U}$. We then compute the eigendecomposition of $\tilde{\mathbf{K}}$
\begin{equation}
    \tilde{\mathbf{K}} =\tilde{\pmb{\Phi}}\tilde{\mathbf{\Lambda}}\tilde{\pmb{\Phi}}^{-1}.
\end{equation}
The full-state eigenvector matrix $\pmb{\Phi}$ can then be estimated as follows:
\begin{equation}
    \mathbf{\Phi} = \mathbf{X}_2 \mathbf{V}\mathbf{\Sigma}^{-1}\tilde{\pmb{\Phi}}.
\end{equation}
Related DMD algorithms and innovations include the optimized DMD~\cite{askham2018variable}, Bayesian DMD~\cite{Takeishi2017JCAI}, and subspace DMD~\cite{Takeishi2017PhysRevE}.   Optimized DMD exhibits particularly stable numerical properties~\cite{askham2018variable}.  


In many real-world systems, the underlying system dynamics will not be linear, and thus the linear operator $\mathbf{K}$ estimated by DMD will not provide a good approximation to the nonlinear system dynamics. We may instead decide to work with an augmented state consisting of possibly nonlinear functions of the state $\mathbf{g} = (g_1, \cdots g_D)$, where $D$ is generally larger than the dimension of the original state $N$. We can then compute the DMD on the new data matrices
\begin{align*}
    \mathbf{g}(\mathbf{X}_1) &= \begin{pmatrix}
    \vert & \vert & & \vert\\
    \mathbf{g}(\mathbf{x}_1)&\mathbf{g}(\mathbf{x}_2) & \cdots & \mathbf{g}(\mathbf{x}_{M - 1})\\
    \vert & \vert & & \vert
    \end{pmatrix}
    &
    \mathbf{g}(\mathbf{X}_2) = \begin{pmatrix}
    \vert & \vert & & \vert\\
    \mathbf{g}(\mathbf{x}_2)&\mathbf{g}(\mathbf{x}_3) & \cdots & \mathbf{g}(\mathbf{x}_{M})\\
    \vert & \vert & & \vert
    \end{pmatrix}.
\end{align*}
This allows us to estimate a nonlinear model of the form $\mathbf{g}^{-1} \circ \mathbf{K} \circ \mathbf{g}$ on the state $\mathbf{x}$. This procedure is known as the \textit{extended dynamic mode decomposition} (EDMD) \cite{williamskevrekidisrowley2015} or the variational approach of conformation dynamics (VAC)~\cite{noe2013variational,nuske2014jctc}. The classic DMD is a special case of EDMD in the case where the measurement consists of the identity $\mathbf{g}(\mathbf{x}) = \mathbf{x}$.

A theoretical benefit of EDMD is that the estimated linear operators are known to converge to an orthogonal projection of the Koopman operator onto the subspace of observables spanned by $\mathbf{g}$ in the limit of infinite data \cite{kordamezic2017JNS}. Furthermore, in the limit as the observables in $\mathbf{g}$ span the relevant Hilbert space of observable functions, the action of the Koopman operator on this space can be exactly reconstructed \cite{kordamezic2017JNS}. 
However, EDMD is limited by the quality of the selection of observable functions, since a generic selection of a finite number of observables $\mathbf{g}$ will not span a Koopman-invariant subspace~\cite{Brunton2016plosone} and thus the orthogonal projection may discard the relevant dynamics of the system.  
Augmenting $\mathbf{g}$ with additional measurements increases the span of these observables and thus may result in a higher-quality approximation. 
However, this can result in an exceptionally large state, making EDMD prone to overfitting.   
The VAC approach explicitly cross-validates the resulting model to avoid overfitting~\cite{noe2013variational,nuske2014jctc}.  
In some cases, machine learning methods \cite{lidietrichbolltkevrekidis2017chaos} or domain knowledge \cite{PDEKoopman_kutzetal2018complexity} can result in a more effective observable selection. For example, Neural networks and deep learning have recently been used to great advantage to represent the Koopman operator~\cite{Takeishi2017nips,Yeung2019ACC,Wehmeyer2018JCP,Mardt2017arxiv,Lusch2018NatCom,OttoRowley2019SIADS}. However, these techniques often lack formal justification.  

\subsection{Time Delay Embedding}
A classic technique for dealing with partial state information is \textit{delay embedding}. This method augments a state represented by a single (or few) measurement function with its past history, resulting in a new observable $\tilde{\bg}(\bx(t)) =: (g(\mathbf{x}(t)), g(\mathbf{x}(t - \Delta t)), g(\mathbf{x}(t - 2\Delta t)), \cdots, g(\mathbf{x}(t - n\Delta t)), \cdots, g(\mathbf{x}(t - (N-1)\Delta t))) \in \mathbb{R}^{N}$ with the lag time $\Delta t$. This is known as a \textit{delay embedding} of the trajectory $g(\mathbf{x}(t))$. It was shown by Takens \cite{takens1981} that under mild conditions on the observable $g$, the dynamics of the delay vector $\mathbf{y}$ are guaranteed to be diffeomorphic to the dynamics of the original state $\mathbf{x}$, provided that the embedding dimension is $N \geq 2n + 1$, where $n$ is the dimension of the state. In many cases, a smaller embedding dimension may be chosen without sacrificing the diffeomorphism.

In practice, the quality of the reconstruction may depend greatly on parameter choices, such as the lag time $\delta$ and the embedding dimension $N$ \cite{liebertschuster1988,Gibson1992phD}. One approach for dealing with these issues is to compute the principal components of the trajectory \cite{broomheadking1986,vautardghil1989}. These are obtained by computing the SVD of a Hankel matrix on the trajectory data:
\begin{align}\label{eq:hankelsvd}
    \mathbf{H} &= \begin{pmatrix}
    g(\mathbf{x}_1) & g(\mathbf{x}_2)&\cdots & g(\mathbf{x}_M) \\
    g(\mathbf{x}_2) & g(\mathbf{x}_3) & \cdots & g(\mathbf{x}_{M+1})\\
    \vdots & \vdots & \ddots & \vdots\\
    g(\mathbf{x}_N) & g(\mathbf{x}_{N+1}) & \cdots & g(\mathbf{x}_{N+M-1})
    \end{pmatrix}
    =
    \begin{pmatrix}
    \vert & \vert & & \vert\\
    \tilde{\bg}(\bx_1)&\tilde{\bg}(\bx_2)&\cdots&\tilde{\bg}(\bx_M)\\
    \vert & \vert & & \vert
    \end{pmatrix}
= \mathbf{U}\pmb{\Sigma}\mathbf{V}^*.
\end{align}
The principal components (i.e. the right singular vectors of the SVD decomposition) of the trajectory are the columns of $\mathbf{V}$. Each row $\mathbf{v}_j$ are the embeddings of the original states $\mathbf{x}(t)$. An appropriate dimension for the embedded state can be chosen by examining the singular value of each component and truncating after these pass below an appropriate threshold.  Gavish and Donoho, for instance, provide a principled way to apply such thresholding~\cite{gavish2014optimal}.

This approach is advantageous because each principal component ${\bf v}_j$ is normalized and uncorrelated with the other components of the trajectory. These ensure that the reconstructed attractor will not be {\em stretched} along a particular axis, and that each component carries as little redundancy with the others as possible. Furthermore, the subspace spanned by the first $r$ principal components is the best rank-$r$ approximation of the trajectory space in a least-squares sense, so the principal component basis is in this sense an optimal basis for state-space reconstruction. We will return to these considerations at a later point.

The Hankel/delay embedded representation of the state trajectory has been recently connected to Koopman theory~\cite{bruntonproctorkaiserkutz2017}. The Hankel matrix can be rewritten using the action of the Koopman operator on $g$:
\begin{equation}
    \mathbf{H} = \begin{pmatrix}
    g(\bx_1) & \mathcal{K}_{\Delta t} g(\bx_1) & \cdots & \mathcal{K}_{\Delta t}^{M-1} g(\bx_1)\\
    \mathcal{K}_{\Delta t} g(\bx_1) & \mathcal{K}_{\Delta t}^2 g(\bx_1) & \cdots & \mathcal{K}_{\Delta t}^M g(\bx_1)\\
    \vdots & \vdots & \ddots & \vdots\\
    \mathcal{K}_{\Delta t}^{N-1} g(\bx_1) & \mathcal{K}_{\Delta t}^{N} g(\bx_1) & \cdots & \mathcal{K}_{\Delta t}^{M+N-2} g(\bx_1)
    \end{pmatrix}
    = \begin{pmatrix}
    \vert & \vert & & \vert\\
    \tilde{\bg}(\bx_1) & \mathcal{K}_{\Delta t}\tilde{\bg}(\bx_1) & \cdots & \mathcal{K}_{\Delta t}^{M-1}\tilde{\bg}(\bx_1)\\
    \vert & \vert & & \vert
    \end{pmatrix}.
\end{equation}
The delay-embedded state vector can be viewed as the vector of observables, $\tilde{\bg} = (g, K_{\Delta t} g, K^2_{\Delta t} g, \cdots, K^N_{\Delta t} g)$. Due to the special connection with time delay observables and the Koopman operator, they are a natural basis for DMD and related Koopman spectral techniques. 
Delay embeddings have been used previously in DMD analyses in cases where only partial state information is available \cite{bruntonjohnsonojemannkutz2016,Tu2014jcd}.  
Performing DMD on delay coordinates for linear systems is closely related to the eigensystem realization algorithm (ERA)~\cite{juangpappa1985} and singular spectrum analysis (SSA)~\cite{vautardghil1989}.
A nonlinear variant of SSA based on Laplacian spectral analysis has been useful for time series with intermittent phenomena~\cite{Giannakis2012pnas}.  

Brunton et. al. formally introduced this approach of performing a sparsified linear and nonlinear model regression on delay coordinates~\cite{Brunton2016pnas} and established the connection with Koopman theory and chaotic systems in \cite{bruntonproctorkaiserkutz2017}; the approach is referred to as the Hankel Alternative View of Koopman (HAVOK) analysis. 
They found that these coordinates provided nearly Koopman-invariant subspaces, as well as exhibiting several other interesting properties. 
Arbabi and Mezic later studied the properties of HAVOK models \cite{arbabimezic2017}. They establish that for ergodic systems, HAVOK converges to the true Koopman eigenfunctions and eigenvalues of the system. This convergence result is unusual for generic families of measurement functions, and motivates the application of Koopman spectral methods to delay coordinates for systems where incomplete measurement data is available. HAVOK models are also closely related to the Prony approximation of the Koopman decomposition~\cite{Susuki2015cdc}.  

Das and Giannakis~\cite{DasGiannakis2019} have also investigated the spectrum of the Koopman operator in delay coordinates. They provided a kernel integral operator method for approximating the Koopman operator from observable data, using delay coordinates. They showed that, in the limit of infinitely many delay coordinates, their approximation converges to the Koopman operator on the subspace spanned by Koopman eigenfunctions. They also catalog some interesting empirical observations concerning the bi-diagonal structure of the resultant Koopman approximations, which was originally noticed in \cite{bruntonproctorkaiserkutz2017}. Giannakis also investigated the properties of Koopman operators of ergodic systems in delay coordinates ~\cite{Giannakis2019AppHarm}. This paper established a correspondence between Laplace-Beltrami operators on delay-coordinate spaces and the Koopman operator. Our work is motivated in part by the theoretical successes of these works, and in part by the observation of a bi-diagonal structure, which we believe the results presented in this paper can explain.

Delay embedding can also be used to augment the state vector even when complete measurement data is available. This may be desirable for nonlinear systems with broad-spectrum behavior. The DMD can extract at most $N$ dynamic modes and eigenvalues, where $N$ is the dimension of vector $\mathbf{y}$. This is often insufficient for nonlinear systems, wherein a larger range of frequencies can be active even for a simple state. Le Clainche and Vega introduced \textit{higher order dynamic mode decomposition} (HODMD) to circumvent these issues \cite{leclainchevega2017a}. HODMD computes DMD on a state where the delay embedding is across multiple dimensions. The resultant state has dimension $nN$, where $N$ is the length of the delay embedding and $n$ is the dimension of the underlying state. They found that this approach was able to find a larger range of frequencies and produce more accurate reconstructions of the dynamics for a large variety of nonlinear systems.

\section{Convolutional Coordinates}

A common idea in signal processing is to extract features from signals by convolving with a filter function. For example, convolving signals with Gaussians is known to remove noise from the signal, while convolving the signal with a wavelet basis can be used to extract interpretable feature set for use in classification. Indeed, signal processing analysis is dominated by filtering of time series data. The fundamental insight of these approaches is that looking at local segments of the trajectory or signal offers more information than looking only at a single point. In addition to this, convolutions can also be thought of as linear coordinates on the time-delay embedded state of the trajectory. In this section, we develop a formalism for interpreting these coordinates as functions directly on the state space, which we call \textit{convolutional coordinates}. We prove that for an appropriately chosen set of such coordinates, the dynamics of the system are intrinsically linear. Furthermore, the representation of these dynamics depends only on the choice of convolution functions, and not on the intrinsic dynamics of the system. We connect these results to Koopman theory and argue that these coordinates provide \textit{system-independent representations} of the Koopman operator. These representations are intrinsically infinite-dimensional, and in general the maps obtained by projecting orthogonally onto do not coincide with the closest approximation of the Koopman on these coordinates. This makes these results interesting but not always suitable for practical usage. To circumvent this problem, careful attention must be paid to choose a basis for which these two maps coincide. One basis for which this holds are the SVD basis vectors for the trajectory, which are discussed in section 4.

We consider the dynamical system defined in~\eqref{eq:defnonlin} with ${\mathbf x}\in\mathbb{R}^n$ and an observable on this state ${\mathbf g}:\mathbb{R}^n \to \mathbb{R}^k$ (in many applications of delay embedding, this observable is in fact a scalar (i.e. $k = 1$), and ${\mathbf g}(\textbf{x})$ will select a single component of the state vector, e.g. ${\mathbf g}(\mathbf{x}) = x_1$). Let $\pmb{\phi}_j:[-\tau,\tau] \to \mathbb{R}^k$ be a differentiable, orthonormal basis on the interval $[-\tau,\tau]$, with the inner product $\langle \pmb{\phi}_j, \pmb{\phi}_k \rangle = \int_{-\tau}^\tau \pmb{\phi}_j(s) \cdot \pmb{\phi}_k(s)\,ds$.

We define the \textit{convolutional coordinates} $w_j$ associated with this basis set as follows:
\begin{equation}\label{eq:defconvcoords}
	w_j(\mathbf{x}) 
	= \langle \pmb{\phi}_j, \mathcal{K}_{(\cdot)} \textbf{g}(\textbf{x}) \rangle 
	= \int_{-\tau}^\tau \pmb{\phi}_j(s) \cdot \mathcal{K}_s \mathbf{g}(\textbf{x})\,ds,
\end{equation}
For a specific trajectory $\textbf{x}(t)$, the evolution of the convolutional coordinate can be written in the intuitive form
\begin{equation}
	w_j(\mathbf{x}(t)) 	= \int_{-\tau}^\tau \pmb{\phi}_j(s) \cdot {\mathbf g}(\mathbf{x}(t+s))\,ds .
\end{equation}
This corresponds to convolving the basis functions $\pmb{\phi}_j$ with the trajectory ${\mathbf g}(\mathbf{x}(t))$ over the time window $[-\tau,\tau]$. Since the functions $\pmb{\phi}_j$ are orthonormal, the convolutional coordinates can also by thought of as the projections of these basis functions onto local segments of the measurement trajectory. We can reconstruct $\mathcal{K}_s \textbf{g}(\textbf{x})$ using a basis function expansion:
\begin{equation}\label{eq:slidingwindowreconstruction}
\mathcal{K}_s{\mathbf g}(\bx) 
= \sum_{j=0}^{\infty} w_j(\bx) \pmb{\phi}_j(s)
\end{equation}
For a general basis set, this expansion will only hold in an $L^2$ sense on $s \in [-\tau, \tau]$, rather than pointwise; however, for appropriate observables and basis functions (e.g. a continuous observable and a basis of Chebyshev polynomials), this equation will hold pointwise for all $s$.

We are now in a position to demonstrate a remarkable fact about these coordinates: for appropriately chosen basis sets and observable functions, the action of the Koopman operator on these coordinates can be described in simple, analytic form. Even more remarkably, this representation does not depend on the underlying system dynamics.

\begin{theorem}\label{maintheorem}
Let $\mathbf{g}$ be a continuous $L^2$ function on $\mathbb{R}^n$ and let $\{\pmb{\phi}_j \}$ be an orthonormal, differentiable basis set for the set of continuous $L^2$ functions on $[-\tau,\tau]$. Suppose that for some subset $\Omega \subset \mathbb{R}^n$, for each point $\textbf{x} \in \Omega$, for $s \in [-\tau,\tau]$, the basis function expansion $\sum_{j = 0}^\infty w_j(\textbf{x})\pmb{\phi}_j(s)$ converges uniformly (with respect to $s$) to $\mathcal{K}_{s}\mathbf{g}(\textbf{x})$. Then the action of the Koopman generator on $w_j |_{\Omega}$ is given, pointwise on $\Omega$, by

\begin{equation}\label{eq:convdifferentialdynamics}
    \mathcal{K} w_j(\textbf{x}) = \sum_{k=0}^{\infty} K_{jk} w_k(\textbf{x})
\end{equation}
where
\begin{equation}\label{eq:convcoeffs}
    K_{jk} =  \langle \pmb{\phi}_j,\pmb{\phi}_k'\rangle = \int_{-\tau}^\tau \pmb{\phi}_j(s) \cdot \pmb{\phi}_k'(s)\,ds .
\end{equation}

\end{theorem}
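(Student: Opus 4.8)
The plan is to compute the generator action directly from its definition, $\mathcal{K} w_j(\mathbf{x}) = \frac{d}{dt}\big|_{t=0} w_j(\mathbf{x}(t))$, and to convert the resulting time derivative taken along the trajectory into an $s$-derivative inside the convolution integral, where the reconstruction formula \eqref{eq:slidingwindowreconstruction} can be applied. First I would fix $\mathbf{x}\in\Omega$ and write the convolutional coordinate along the trajectory as $w_j(\mathbf{x}(t)) = \int_{-\tau}^\tau \pmb{\phi}_j(s)\cdot\mathbf{g}(\mathbf{x}(t+s))\,ds$. Since $\mathbf{g}\circ\mathbf{x}$ is $\mathcal{C}^1$ and $\pmb{\phi}_j$ is continuous, the integrand and its $t$-derivative are continuous on the compact interval $[-\tau,\tau]$, so differentiation under the integral sign is justified and gives $\frac{d}{dt} w_j(\mathbf{x}(t)) = \int_{-\tau}^\tau \pmb{\phi}_j(s)\cdot \partial_t\mathbf{g}(\mathbf{x}(t+s))\,ds$.

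The crucial identity is that, writing $h(u):=\mathbf{g}(\mathbf{x}(u))$, we have $\partial_t \mathbf{g}(\mathbf{x}(t+s)) = h'(t+s) = \partial_s \mathbf{g}(\mathbf{x}(t+s))$; evaluating at $t=0$ turns the trajectory time-derivative into an $s$-derivative of $\mathcal{K}_s\mathbf{g}(\mathbf{x})$, yielding $\mathcal{K} w_j(\mathbf{x}) = \int_{-\tau}^\tau \pmb{\phi}_j(s)\cdot \frac{d}{ds}\mathcal{K}_s\mathbf{g}(\mathbf{x})\,ds$. Next I would substitute the reconstruction $\mathcal{K}_s\mathbf{g}(\mathbf{x}) = \sum_k w_k(\mathbf{x})\pmb{\phi}_k(s)$, differentiate term by term in $s$, swap the sum with the integral, and read off $\int_{-\tau}^\tau \pmb{\phi}_j(s)\cdot\pmb{\phi}_k'(s)\,ds = K_{jk}$, so that $\mathcal{K} w_j = \sum_k K_{jk} w_k$, which is exactly \eqref{eq:convdifferentialdynamics}--\eqref{eq:convcoeffs}.

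The main obstacle is justifying the term-by-term differentiation: the hypothesis supplies uniform convergence of $\sum_k w_k(\mathbf{x})\pmb{\phi}_k(s)$ to $\mathcal{K}_s\mathbf{g}(\mathbf{x})$, but differentiating under the sum additionally requires the differentiated series $\sum_k w_k(\mathbf{x})\pmb{\phi}_k'(s)$ to converge uniformly to $\frac{d}{ds}\mathcal{K}_s\mathbf{g}(\mathbf{x})$. I expect this to need either an extra smoothness assumption on $\mathbf{g}$ along trajectories or an appeal to properties of the specific basis (e.g.\ Chebyshev or Fourier), consistent with the pointwise-convergence caveat noted after \eqref{eq:slidingwindowreconstruction}.

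An alternative route that sidesteps differentiating the series is integration by parts in the display $\int_{-\tau}^\tau \pmb{\phi}_j(s)\cdot\frac{d}{ds}\mathcal{K}_s\mathbf{g}(\mathbf{x})\,ds = \big[\pmb{\phi}_j(s)\cdot\mathcal{K}_s\mathbf{g}(\mathbf{x})\big]_{-\tau}^\tau - \int_{-\tau}^\tau \pmb{\phi}_j'(s)\cdot\mathcal{K}_s\mathbf{g}(\mathbf{x})\,ds$, into which the \emph{undifferentiated} (hence uniformly convergent) series can be inserted directly. The boundary term then equals $\sum_k w_k(\mathbf{x})\,\big[\pmb{\phi}_j\cdot\pmb{\phi}_k\big]_{-\tau}^\tau$, and recombining it with $-\sum_k w_k(\mathbf{x})\langle\pmb{\phi}_j',\pmb{\phi}_k\rangle$ via the same integration-by-parts identity gives precisely $\sum_k w_k(\mathbf{x})\langle\pmb{\phi}_j,\pmb{\phi}_k'\rangle$, confirming that the stated coefficients $K_{jk}$ are the correct ones regardless of which of the two expansion conventions is used. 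I would present the direct substitution argument as the main proof and use the integration-by-parts computation as the consistency check on the boundary contributions.
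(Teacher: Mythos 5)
Your ``alternative route'' is, in substance, the paper's own proof, and it --- not the direct substitution argument --- should be the main proof. As you yourself observe, term-by-term differentiation of $\sum_k w_k(\mathbf{x})\pmb{\phi}_k(s)$ requires uniform convergence of the differentiated series $\sum_k w_k(\mathbf{x})\pmb{\phi}_k'(s)$, which the hypotheses do not supply; the theorem only assumes uniform convergence of the undifferentiated expansion. The integration-by-parts computation is exactly what the paper does: it produces the boundary term $\bigl[\pmb{\phi}_j(s)\cdot\mathcal{K}_s\mathbf{g}(\mathbf{x})\bigr]_{-\tau}^{\tau}$ plus $-\int_{-\tau}^{\tau}\pmb{\phi}_j'(s)\cdot\mathcal{K}_s\mathbf{g}(\mathbf{x})\,ds$, substitutes the uniformly convergent series into both pieces (uniform convergence justifies both the sum--integral interchange and the pointwise evaluation of the series at $s=\pm\tau$), and then integrates by parts a second time on each $\langle\pmb{\phi}_j',\pmb{\phi}_k\rangle$ so that the boundary contributions cancel and $K_{jk}=\langle\pmb{\phi}_j,\pmb{\phi}_k'\rangle$ emerges. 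So the content is right; only the billing of which argument is primary is off.

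One further technical point distinguishes the paper's derivation from both of your routes. Both of your computations pass through the quantity $\frac{d}{ds}\mathcal{K}_s\mathbf{g}(\mathbf{x})$: you first convert $\partial_t$ into $\partial_s$, and your integration by parts starts from an integral containing this $s$-derivative. But $\mathbf{g}$ is only assumed continuous, so $s\mapsto\mathcal{K}_s\mathbf{g}(\mathbf{x})=\mathbf{g}(\mathbf{F}_s(\mathbf{x}))$ need not be differentiable, and this intermediate expression may fail to exist under the stated hypotheses. The paper sidesteps this by performing the change of variables $s'=s+t$ \emph{before} differentiating: afterwards the $t$-dependence sits only in the integration limits $[t-\tau,t+\tau]$ and in the smooth factor $\pmb{\phi}_j(s'-t)$, so Leibniz' rule delivers the boundary term and $-\int_{-\tau}^{\tau}\pmb{\phi}_j'(s)\cdot\mathcal{K}_{s}\mathbf{g}(\mathbf{x})\,ds$ directly, without ever differentiating $\mathcal{K}_s\mathbf{g}$ in $s$. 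If you reorganize your argument to differentiate the shifted integral rather than the integrand, the proof goes through with no extra smoothness assumptions on $\mathbf{g}$.
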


\begin{proof}
Consider a point $\textbf{x} \in \Omega$. We then have
\begin{align*}
    \mathcal{K} w_j(\textbf{x}) &= \mathcal{K} \int_{-\tau}^\tau \pmb{\phi}_j(s) \cdot \mathcal{K}_s \mathbf{g}(\textbf{x})\,ds = \frac{d}{dt} \int_{-\tau}^{\tau}\pmb{\phi}_j(s) \cdot \textbf{g} \circ \textbf{F}_{s + t}(\textbf{x})\,ds\, \bigg|_{t = 0} \\ 
\end{align*}
Making the substitution $s' = t + s$ in the above integral yields
\begin{align*}
    \frac{d}{dt}\int_{-\tau}^{\tau}\pmb{\phi}_j(s) \cdot \textbf{g} \circ \textbf{F}_{s + t}(\textbf{x})\,ds\, \bigg|_{t = 0} &= \frac{d}{dt} \int_{t-\tau}^{t + \tau} \pmb{\phi}_j(s' - t) \cdot \mathcal{K}_{s'} \mathbf{g}(\textbf{x})\,ds' \,\bigg|_{t = 0}\\
    &= \bigg[\pmb{\phi}_j(s' - t) \cdot \mathcal{K}_{s'} \mathbf{g}(\textbf{x}) \bigg|_{t - \tau}^{t + \tau} - \int_{t - \tau}^{t + \tau} \pmb{\phi}'_j(s' - t) \cdot \mathcal{K}_{s'} \mathbf{g}(\textbf{x})\, ds'\bigg] \bigg|_{t = 0}\\
    &= \pmb{\phi}_j(s) \cdot \mathcal{K}_{s} \mathbf{g}(\textbf{x}) \bigg|_{-\tau}^{\tau} - \int_{-\tau}^\tau \pmb{\phi}'_j(s) \cdot \mathcal{K}_{s} \mathbf{g}(\textbf{x})\,ds
\end{align*}
where we have evaluated the above derivative using Leibniz' integral rule $\frac{d}{dt} \int_{a(t)}^{b(t)} f(s,t)\,ds = f(b(t),t)b'(t) - f(a(t),t)a'(t) + \int_{a(t)}^{b(t)} \partial_t f(s,t)\,ds$. At this point we may expand $\mathcal{K}_{s'} \mathbf{g}(\textbf{x})$ using (\ref{eq:slidingwindowreconstruction}) and substitute this into the above integral:
\begin{align*}
    \pmb{\phi}_j(s) \cdot \mathcal{K}_s \mathbf{g}(\textbf{x}) \bigg|_{-\tau}^\tau - \int_{-\tau}^\tau \pmb{\phi}_j'(s) \cdot \mathcal{K}_s \mathbf{g}(\textbf{x})\,ds
    &=  \pmb{\phi}_j(s) \cdot \mathcal{K}_s \mathbf{g}(\textbf{x}) \bigg|_{-\tau}^\tau - \int_{-\tau}^\tau \pmb{\phi}_j'(s) \cdot \sum_{k = 0}^\infty w_k(\textbf{x})\pmb{\phi}_k(s)\,ds\\
    &= \pmb{\phi}_j(s) \cdot \mathcal{K}_s \mathbf{g}(\textbf{x}) \bigg|_{-\tau}^\tau - \sum_{k = 0}^\infty w_k(\textbf{x}) \int_{-\tau}^\tau \pmb{\phi}_j'(s) \cdot \pmb{\phi}_k(s)\,ds\\
\end{align*}

Finally, integrating by parts again, we obtain
\begin{align*}
    \mathcal{K} w_j(\textbf{x}) &= \pmb{\phi}_j(s) \cdot \mathcal{K}_s \mathbf{g}(\textbf{x}) \bigg|_{-\tau}^\tau - \sum_{k = 0}^\infty w_k(\textbf{x}) \bigg[ \pmb{\phi}_j(s) \cdot \pmb{\phi}_k(s) \bigg|_{-\tau}^\tau - \int_{-\tau}^\tau \pmb{\phi}_j(s) \cdot \pmb{\phi}_k'(s) \,ds\bigg]\\
    &= \pmb{\phi}_j(s) \cdot \mathcal{K}_s \mathbf{g}(\textbf{x}) \bigg|_{-\tau}^\tau - \pmb{\phi}_j(s) \cdot \sum_{k = 0}^\infty w_k(\textbf{x}) \cdot \pmb{\phi}_k(s) \bigg|_{-\tau}^\tau + \sum_{k = 0}^\infty w_k(\textbf{x}) \langle \pmb{\phi}_j, \pmb{\phi}_k' \rangle\\
    &= \sum_{k = 0}^\infty w_k(\textbf{x}) \langle \pmb{\phi}_j, \pmb{\phi}_k' \rangle
\end{align*}

\end{proof}

{Special care must be taken when choosing a domain $\Omega$ and a set of basis functions $\pmb{\phi}_j$ in order to ensure the validity of the representation (\ref{eq:convdifferentialdynamics}). A naively chosen basis will generally not satisfy the hypotheses of the theorem over the entire domain of the dynamical system. For instance, expanding the a local trajectory segment $g(\textbf{x}(t + s))$ in the Fourier basis $\phi_{n}(s) = \exp(2\pi i n s/\tau)$ will generally only result in a uniformly convergent series when the domain $\Omega$ is a $2\tau$-periodic orbit of the system. Other basis sets offer more flexibility: for instance, one can expand an arbitrary Lipshitz-continuous function in a basis of Chebyshev polynomials, and this expansion is guaranteed to be absolutely uniformly convergent \cite{townsendtrefethen2015}. For such bases, the convolutional coordinates have the further, remarkable property that the choice of window length $\tau$ can be arbitrary (in practice, the choice of $\tau$ may affect reconstruction quality, due to the nature of finite sampling of the trajectory; we refer the reader to \cite{liebertschuster1988,Gibson1992phD} for discussions of practical considerations for the selection of a delay window in the setting of qualitative attractor reconstruction).

Unless almost all trajectories in $\Omega$ remain in $\Omega$, the existence of this representation of the Koopman generator on $\Omega$ does not guarantee that these coordinates form a Koopman-invariant subspace. For systems with an invariant measure, one may choose $\Omega$ to be the support of the invariant measure, in which case, provided the basis function expansions converge appropriately, the convolutional coordinates form a Koopman-invariant subspace. }

An analogous result holds for the discrete-time dynamics of the convolutional coordinates, with the additional assumptions of the analyticity and analytic continuability of both the trajectory and the basis function set.  These are significantly stronger assumptions than those required for theorem \ref{maintheorem}, to the point where this result may not be useful for a large variety of systems; however, we include it for completeness below.

\begin{theorem}\label{thm:analytictheorem}
Let $\mathbf{x}(t)$ be a differentiable function on $\mathbb{R}$. Suppose $\mathbf{x}(t)$ is locally analytic everywhere with radius of convergence at least $\tau + \Delta t$. Let $\{\pmb{\phi}_j\}_{j=0}^{\infty}$ be a differentiable, orthogonal basis of $L^2[-\tau, \tau]$. Suppose also that each $\pmb{\phi}_j$ can be analytically continued to a radius $\tau + \Delta t$. Then the action of the Koopman operator $\mathcal{K}_{\Delta t}$ on the convolutional coordinates $w_j$ is given by
\begin{equation}\label{eq:convdiscretedynamics}
    \mathcal{K}_{\Delta t} w_j 
    = \sum_k w_k \int_{-\tau}^\tau \pmb{\phi}_j(s) \cdot \hat{\pmb{\phi}}_k(s + \Delta t)\,ds
    = \sum_k w_k \langle \pmb{\phi}_j(s), \hat{\pmb{\phi}}_k(s + \Delta t)\rangle_s.
\end{equation}
where $\hat{\pmb{\phi}}_k(t)$ denotes the analytic continuation of $\pmb{\phi}_k$ on the interval $[-\tau-\Delta t,\tau+\Delta t]$
\end{theorem}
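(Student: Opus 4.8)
The plan is to follow the skeleton of the continuous-time argument in Theorem \ref{maintheorem}, but to replace its integration-by-parts manipulation with an analytic-continuation step, since here we propagate by the finite-time operator $\mathcal{K}_{\Delta t}$ rather than by the generator. First I would unfold the definitions. Writing the coordinate along the trajectory as in \eqref{eq:defconvcoords}, namely $w_j(\mathbf{x}(t)) = \int_{-\tau}^\tau \pmb{\phi}_j(s)\cdot \mathbf{g}(\mathbf{x}(t+s))\,ds$, and using that $\mathcal{K}_{\Delta t}$ merely advances the trajectory by $\Delta t$, we get
\begin{equation*}
\mathcal{K}_{\Delta t} w_j(\mathbf{x}(t)) = w_j(\mathbf{x}(t+\Delta t)) = \int_{-\tau}^\tau \pmb{\phi}_j(s)\cdot \mathbf{g}(\mathbf{x}(t+\Delta t + s))\,ds .
\end{equation*}
Thus the entire content of the theorem is to re-express the time-shifted segment $\mathbf{g}(\mathbf{x}(t+\Delta t+s))$, for $s\in[-\tau,\tau]$, back in terms of the coordinates $w_k$. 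The obstruction is that the shift by $\Delta t$ carries the argument into $[-\tau+\Delta t,\tau+\Delta t]$, which protrudes beyond the reconstruction window $[-\tau,\tau]$ on which the sliding-window expansion \eqref{eq:slidingwindowreconstruction} is valid; that expansion therefore cannot be applied directly, and this is exactly why the analyticity hypotheses are imposed.

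The crux of the proof is to extend \eqref{eq:slidingwindowreconstruction} beyond its original window by analytic continuation. By hypothesis $\mathbf{x}(t)$, and hence $s\mapsto \mathbf{g}(\mathbf{x}(t+s))$, is real-analytic with radius of convergence at least $\tau+\Delta t$, while each basis function admits an analytic continuation $\hat{\pmb{\phi}}_k$ to $[-\tau-\Delta t,\tau+\Delta t]$. I would argue that the continued series $\sum_k w_k(\mathbf{x})\hat{\pmb{\phi}}_k(s)$ defines an analytic function on the enlarged interval which coincides with $\mathbf{g}(\mathbf{x}(t+s))$ on $[-\tau,\tau]$; since $[-\tau,\tau]$ has accumulation points, the identity theorem for analytic functions then forces
\begin{equation*}
\mathbf{g}(\mathbf{x}(t+s)) = \sum_{k=0}^\infty w_k(\mathbf{x})\,\hat{\pmb{\phi}}_k(s), \qquad s\in[-\tau-\Delta t,\tau+\Delta t].
\end{equation*}

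With this extended expansion in hand, I would substitute $s\mapsto s+\Delta t$, which is legitimate because $s+\Delta t\in[-\tau+\Delta t,\tau+\Delta t]$ lies in the enlarged interval for every $s\in[-\tau,\tau]$, insert the result into the integral above, and interchange summation with integration to obtain
\begin{equation*}
\mathcal{K}_{\Delta t} w_j(\mathbf{x}) = \sum_{k=0}^\infty w_k(\mathbf{x}) \int_{-\tau}^\tau \pmb{\phi}_j(s)\cdot \hat{\pmb{\phi}}_k(s+\Delta t)\,ds ,
\end{equation*}
which is precisely \eqref{eq:convdiscretedynamics}.

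I expect the hardest and most delicate step to be the justification of the analytic continuation of the infinite series: establishing that $\sum_k w_k(\mathbf{x})\hat{\pmb{\phi}}_k(s)$ converges locally uniformly on a complex neighborhood of the enlarged interval, so that its sum is genuinely analytic and actually equals the continuation of $\mathbf{g}(\mathbf{x}(t+\cdot))$, rather than merely agreeing with it on the real window $[-\tau,\tau]$. This requires simultaneously controlling the growth of the continued basis functions $\hat{\pmb{\phi}}_k$ and the decay of the coefficients $w_k(\mathbf{x})$, and it is exactly this control that forces the substantially stronger hypotheses noted before the statement. The same local uniform convergence, restricted to the compact shifted window, also legitimates the interchange of sum and integral in the final step; everything else is routine.
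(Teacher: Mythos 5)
Your proposal follows essentially the same route as the paper's proof: unfold $\mathcal{K}_{\Delta t}w_j(\mathbf{x}(t)) = w_j(\mathbf{x}(t+\Delta t))$ as an integral over the shifted trajectory segment, replace $\mathbf{g}(\mathbf{x}(t+\Delta t+s))$ by the analytically continued expansion $\sum_k w_k(\mathbf{x})\hat{\pmb{\phi}}_k(s+\Delta t)$, and interchange sum and integral. The only difference is that you explicitly flag and sketch the justification of the continuation step (identity theorem plus locally uniform convergence of the continued series), which the paper's own proof asserts without comment; this is a point in your favor rather than a divergence of method.
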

\begin{proof}
We have
\begin{align*}
    \mathcal{K}_{\Delta t} w_j (\mathbf{x}(t)) &= w_j(\mathbf{x}(t + \Delta t))\\
    &= \int_{-\tau}^\tau \pmb{\phi}_j(s) \cdot \mathbf{g}(\bx(t + \Delta t + s))\rangle\,ds\\
    &= \int_{-\tau}^\tau \pmb{\phi}_j(s) \cdot \sum_{k=0}^{\infty} w_k(\bx(t)) \hat{\pmb{\phi}}_k(s + \Delta t)\,ds\\
    &= \sum_{k=0}^{\infty} w_k(\mathbf{x}(t)) \int_{-\tau}^\tau\pmb{\phi}_j(s) \cdot \hat{\pmb{\phi}}_k(s + \Delta t)\,ds.
\end{align*}
\end{proof}

In summary, the action of the Koopman generator on the convolutional coordinates $\{w_j\}_{j=0}^{\infty}$ admits a concise, analytic representation that is system-invariant, depending only on the choice of basis functions. Provided the system admits an invariant measure, these functions form a Koopman-invariant subspace. 

\subsection{Other Universal Representations}

It may seem surprising that the coefficients in the above expansion are apparently independent of both the underlying dynamical system, as well as the measurement functions. Properly understood, however, this universality is very intuitive, and can be understood by analogy with another set of coordinates with a familiar, universal behavior: derivatives. Consider the set of coordinates $\{y_n\}$ defined by
\begin{align*}
    y_n(\textbf{x}) = \frac{d^n}{dt^n} g \circ \textbf{F}_t (\textbf{x}) \bigg|_{t = 0}
\end{align*}
The action of the Koopman operator on these `derivative coordinates' has a very simple form, which is universal by construction:
\begin{align*}
    \mathcal{K} y_{n} = y_{n+1}
\end{align*}
A similar representation can be found for shift-coordinates
\begin{align*}
    y_n(\textbf{x}) &= g \circ \textbf{F}_{n\Delta t}(\textbf{x}) = \mathcal{K}_{n\Delta t} g(\textbf{x})
\end{align*}
where the action of the Koopman operator $\mathcal{K}_{\Delta t}$ is given by
\begin{align*}
    \mathcal{K}_{\Delta t} y_n &= y_{n + 1}
\end{align*}

The underlying reason for these system-independent representations is that the dynamic information of the system is encoded in the trajectory $g(\textbf{x}(t))$, on which the same functionals are applied to construct the coordinate functions. Convolutional coordinates are likewise defined by taking functionals on segments of trajectories, a process which can be performed identically regardless of the underlying system that generated the trajectory data.

\section{SVD Convolutional Coordinates}
In practice, we will not have access to an infinite set of coordinates with which to embed our signal. We will generally be able to keep track of only a finite number of coordinates at any given time. Furthermore, we will usually not be working with ideal smooth trajectories, but instead with discretized trajectories limited by a finite sampling frequency. This latter condition puts a fundamental limit on the number of linearly independent coordinates that we can generate that are still {\em smooth} enough for finite difference approximations to effectively approximate the derivative of each coordinate basis vector. Since we can only consider a finite set of coordinates, it is imperative that we choose a set that effectively encodes the dynamics of our system. In general, an arbitrary basis will not be suitable. The reason for this is that the fixed linear relations are usually too rigid to effectively encode the dynamics without higher-order corrections. The spectrum of the estimated finite-dimensional linear system on the convolution coordinates depends strictly on the choice of basis, and will generally not match that of the underlying system. This is a fundamental issue, particularly when the derived models have unstable eigenvalues. To circumvent this issue, we need to pick a problem-specific basis.

Brunton et al. considered Hankel-SVD coordinates for representing the Koopman operator in \cite{bruntonproctorkaiserkutz2017}. They found that the action of the Koopman operators for a wide variety of systems, included ones with continuous spectra, could be well-approximated by applying DMD to these coordinates. However, their results were primarily empirical, without providing any theoretical guarantees on accuracy. Arbabi and Mezi\'c ~\cite{arbabimezic2017} later studied some properties of these coordinates, but did not apply them to DMD directly. Motivated by the successes of these works, we propose analyzing the properties of the continuous generalization of the Hankel-SVD basis for convolutional coordinates.

The notion of taking the SVD of a continuous function goes back almost a century; for a useful review of the relevant ideas, see \cite{townsendtrefethen2015}. We make use of the following theorem from this review as a basis for what follows:
\begin{theorem}\label{townsend_thm_1}
 For an arbitrary Lipschitz-continuous function $f(s,t)$ on $[a,b] \times [c,d]$, there exists:
\begin{itemize}
    \item an orthonormal basis $\{u_k\}$ on $[a,b]$,
    \item an orthonormal basis $\{v_k\}$ on $[c,d]$,
    \item a unique, nonnegative, monotonically decreasing sequence $\sigma_k$ such that $\sigma_k \to 0$,
\end{itemize}
such that the following series converges absolutely and uniformly for all $s,t$:
\begin{equation}
    f(s,t) = \sum_{i = 0}^\infty \sigma_i u_i(s) v^*_i(t)
\end{equation}
Furthermore, the choice of basis functions $u_k$ and $v_k$ is unique for all $\sigma_k > 0$, up to an overall complex sign. 
\end{theorem}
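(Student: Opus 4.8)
\noindent The plan is to identify the decomposition of $f$ with the singular value decomposition of its associated integral operator, obtain the expansion first in $L^2$ from the spectral theory of compact operators, and then spend the Lipschitz hypothesis to upgrade to absolute and uniform convergence via Mercer's theorem. First I would introduce the Hilbert--Schmidt operator $T : L^2[c,d] \to L^2[a,b]$, $(Tg)(s) = \int_c^d f(s,t)\, g(t)\,dt$; since $f$ is Lipschitz on the compact rectangle it is bounded, hence $f \in L^2([a,b]\times[c,d])$ and $T$ is compact. Applying the spectral theorem to the compact, self-adjoint, positive-semidefinite operators $T^*T$ and $TT^*$ produces a nonincreasing sequence of eigenvalues $\sigma_k^2$ with $\sigma_k \to 0$, an orthonormal eigenbasis $\{v_k\}$ of $T^*T$, and the induced orthonormal system $u_k := \sigma_k^{-1} T v_k$ diagonalizing $TT^*$. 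This gives $Tg = \sum_k \sigma_k \langle g,v_k\rangle u_k$, equivalently the kernel identity $f(s,t) = \sum_k \sigma_k u_k(s) v_k^*(t)$ converging in $L^2([a,b]\times[c,d])$, with monotonicity of $\sigma_k$ and $\sigma_k \to 0$ automatic.

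\noindent Next I would extract the regularity that the Lipschitz hypothesis buys. From $\sigma_k u_k(s) = \int_c^d f(s,t) v_k(t)\,dt$ and $|f(s,t)-f(s',t)| \le L|s-s'|$, every singular function $u_k$ with $\sigma_k>0$ is Lipschitz on $[a,b]$, and symmetrically each $v_k$ on $[c,d]$. Moreover $TT^*$ and $T^*T$ have the continuous, positive-semidefinite kernels $\kappa_u(s,s') = \int_c^d f(s,t)\overline{f(s',t)}\,dt$ and $\kappa_v(t,t') = \int_a^b \overline{f(s,t)}f(s,t')\,ds$, to which Mercer's theorem applies: the series $\sum_k \sigma_k^2 u_k(s) u_k^*(s')$ and $\sum_k \sigma_k^2 v_k(t) v_k^*(t')$ converge absolutely and uniformly, so in particular the \emph{second}-power diagonal sums $\sum_k \sigma_k^2 |u_k(s)|^2$ and $\sum_k \sigma_k^2 |v_k(t)|^2$ are uniformly bounded and uniformly convergent.

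\noindent The crux, which I expect to be the main obstacle, is upgrading the $L^2$ kernel identity to absolute and uniform convergence of $\sum_k \sigma_k u_k(s) v_k^*(t)$. A symmetric Cauchy--Schwarz split in the summation index, $\sigma_k = \sigma_k^{1/2}\cdot \sigma_k^{1/2}$, reduces both claims to a single statement: the uniform convergence of the \emph{first}-power diagonal sums $\sum_k \sigma_k |u_k(s)|^2$ and $\sum_k \sigma_k |v_k(t)|^2$. Indeed, once these converge uniformly, the estimate $|\sum_{k>N}\sigma_k u_k(s)v_k^*(t)| \le (\sum_{k>N}\sigma_k|u_k(s)|^2)^{1/2}(\sum_{k>N}\sigma_k|v_k(t)|^2)^{1/2}$ forces uniform convergence, and the same split gives the absolute bound. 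The difficulty is that Mercer supplies only the second-power sums for free; the first-power sums are the diagonals of the kernels of $(TT^*)^{1/2}$ and $(T^*T)^{1/2}$, whose continuity is \emph{not} automatic for a merely continuous $f$. This is precisely where the Lipschitz assumption (rather than plain continuity) must be used: I would show that Lipschitz regularity forces enough decay of $\sigma_k$ and smoothness of the singular functions that the first-power diagonals converge uniformly --- for instance by establishing that the positive square-root operators $(TT^*)^{1/2}$ and $(T^*T)^{1/2}$ have continuous kernels and invoking Mercer once more --- which closes the argument.

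\noindent Finally, uniqueness follows from the uniqueness of the spectral decomposition. The numbers $\sigma_k^2$ are the intrinsic eigenvalues of $T^*T$, so the $\sigma_k$ are determined; for a simple $\sigma_k>0$ the eigenspaces of $T^*T$ and $TT^*$ are one-dimensional, so $v_k$ is fixed up to a unimodular phase and $u_k = \sigma_k^{-1} T v_k$ inherits the same phase, giving uniqueness up to an overall complex sign, with the residual unitary freedom inside any repeated singular value being the usual ambiguity within a degenerate subspace.
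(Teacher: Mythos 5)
A point of reference first: the paper does not prove this statement itself --- it is quoted from the Townsend--Trefethen review cited alongside it, which in turn attributes the $L^2$ theory to Schmidt and the absolute/uniform convergence to Hammerstein and Smithies. So there is no in-paper proof to compare against, and your argument has to stand on its own.

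Most of your scaffolding is correct and standard: the Hilbert--Schmidt operator $T$, the spectral theorem for $T^*T$ and $TT^*$, the $L^2$ kernel identity with $\sigma_k\downarrow 0$, the Lipschitz continuity of the singular functions with $\sigma_k>0$, Mercer's theorem for the second-power diagonal sums, the Cauchy--Schwarz reduction of absolute and uniform convergence to uniform convergence of $\sum_k\sigma_k|u_k(s)|^2$ and $\sum_k\sigma_k|v_k(t)|^2$, and the uniqueness discussion (where you rightly observe that the stated uniqueness really only holds for simple singular values). But the step you yourself identify as the crux is left as a declaration of intent: ``I would show that Lipschitz regularity forces enough decay \dots for instance by establishing that $(TT^*)^{1/2}$ and $(T^*T)^{1/2}$ have continuous kernels and invoking Mercer once more.'' That is a genuine gap, and the particular route you propose is close to circular: the kernel of $(TT^*)^{1/2}$ \emph{is} the series $\sum_k\sigma_k u_k(s)u_k^*(s')$, so establishing that this operator has a continuous kernel to which Mercer applies is essentially the same task as establishing the uniform convergence of the first-power diagonal sum you want to deduce from it. There is no general principle to fall back on here --- a positive trace-class operator whose square has a continuous (even Mercer) kernel need not itself have one, since $\sum_k\lambda_k|\phi_k(s)|^2$ can converge uniformly while $\sum_k\lambda_k^{1/2}|\phi_k(s)|^2$ diverges at some point. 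The classical proofs (Hammerstein; Smithies' 1938 treatment of singular values of integral equations) do the hard work elsewhere: a Lipschitz condition of order $\alpha>1/2$ in one variable forces singular value decay of the form $\sum_k\sigma_k^p<\infty$ for some $p<1$ (roughly $\sigma_k=O(k^{-3/2})$ when $\alpha=1$), obtained from Bessel-type inequalities applied to difference quotients of the kernel, and the absolute and uniform convergence is then extracted from that decay together with weighted bounds on partial sums of $|u_k(s)|^2$. Your reduction is a sensible target, but without an actual mechanism for controlling the first-power diagonal sums the proof does not close, and plain continuity of $f$ is known to be insufficient --- so whatever mechanism you supply must quantitatively consume the Lipschitz hypothesis rather than merely invoke it.
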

We can apply this decomposition to a trajectory drawn from a dynamical system of interest. Consider an arbitrary point $\textbf{x} \in \mathbb{R}^n$ and let $\Omega$ be the set of points $\textbf{F}_t(\textbf{x})$ for all $t \in [0,T]$. The above theorem implies the existence of orthonormal bases $\textbf{u}_i : [-\tau,\tau] \to \mathbb{R}^k$ and $v_i : [0,T] \to \mathbb{R}$ such that 
\begin{equation}\label{eq:continuousSVD}
    \textbf{g} \circ \textbf{F}_{t + s}(\textbf{x}) = \sum_{i = 0}^\infty \sigma_i \textbf{u}_i(s, \textbf{x}) v^*_i(t, \textbf{x})
\end{equation}
for positive $\sigma_i$. We will refer to these $\textbf{u}_k$ functions as `SVD basis functions.' This decomposition is the continuous analog of the SVD of the Hankel matrix in (\ref{eq:hankelsvd}). It follows from the orthogonality of the functions $v_j(t, \textbf{x})$ that the convolutional coordinates for the basis $\textbf{u}_j$ on the domain $\Omega$ are given by
\begin{equation}
    w_j(\textbf{F}_t(\textbf{x})) = \sigma_j v_j(t, \textbf{x})
\end{equation}

In the event that the system is ergodic and has an invariant measure $\mu$, the orthonormal basis $\textbf{u}_j$ will converge to POD on delays with respect to the invariant measure as $T \to \infty$ \cite{arbabimezic2017}. In this regime, the basis functions $\textbf{u}_j$ are independent of $\textbf{x}$ and can be given as the eigenfunctions of the autocorrelation operator, i.e. the functions satisfying the following integral equation:
\begin{equation}
    \int_{-\tau}^\tau \textbf{C}(s, s')\textbf{u}_j(s')\,ds' = \lambda_j \textbf{u}_j(s)
\end{equation}
where
\begin{equation}
    \textbf{C}(s,s') = \int \mathcal{K}_s \textbf{g}(\textbf{x}) \otimes \mathcal{K}_{s'} \textbf{g}(\textbf{x})\,d\mu
\end{equation}

We show that these coordinates have many interesting properties, including the following:
\begin{enumerate}
    \item The predicted infinite-dimensional Koopman representation $K_{jk} = \langle {\bf u}_j, {\bf u}_k' \rangle$ for this basis match the least-squares estimate for these coefficients across time when estimated on $[0,T]$, and thus matches the output of the EDMD algorithm applied to the convolutional coordinates.
    \item When $\textbf{g}$ is an observable consisting of a linear combination of $r$ eigenfunctions, these coordinates are `optimal' in the sense that exactly the first $r$ coordinates will span a Koopman-invariant subspace.
\end{enumerate}
These properties motivate the application of these coordinates to dynamical systems, which is demonstrated in Sec.~\ref{Sec:Applications}.

\subsection{Spectral Dynamics in Delay Coordinates: Continuous and Discrete  Formulation}
The basis functions $\{\mathbf{u}_j(s)\}_{j=0}^{\infty}$ are an {\em optimal} basis for representing the delay embedding $\mathbf{x}(t + s)$ in the sense that the first $r$ coordinates provide the closest subspace to the full space of coordinates. This basis also has a number of other attractive properties from a dynamical systems perspective. In particular, truncating the infinite-dimensional linear dynamics of the system (\ref{eq:convdifferentialdynamics}) provides the best least-squares approximation to the true dynamics of the system estimated from finite trajectory data. We prove this result below:

\begin{theorem}\label{thm:DMDconvconnection} Consider a point $\textbf{x}$ and let $\mathbf{v}(t,\textbf{x}) = \mathbf{v}(t):=[v_1,v_2,\ldots,v_r]^T$ be the vector consisting of the first $r$ coordinates $v_j(t,\textbf{x})$. Then the coefficients of the linear map $\bT$ advancing the $\mathbf{v}(t)$, i.e.\ $\mathbf{v}'(t) = \bT\mathbf{v}(t)$, minimizing the squared error
\begin{equation}
    E_{S}(\bT) = \langle\bT\mathbf{v} - \mathbf{v}',\bT\mathbf{v} - \mathbf{v}'\rangle_t = \int_0^T \Vert\bT\mathbf{v}(t) - \mathbf{v}'(t)\Vert_2^2\,dt,
\end{equation}
are given by
\begin{equation}\label{eq:AVcomponents}
    T_{jk} = \frac{\sigma_k}{\sigma_j}K_{jk}.
\end{equation}
\end{theorem}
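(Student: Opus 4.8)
The plan is to treat the minimization of $E_S$ as a textbook linear least-squares problem and then let the orthonormality built into the SVD basis do all the work. First I would differentiate $E_S(\bT)$ in the entries of $\bT$ and set the gradient to zero, which produces the normal equations
\[
    \bT\left(\int_0^T \mathbf{v}(t)\mathbf{v}(t)^*\,dt\right) = \int_0^T \mathbf{v}'(t)\mathbf{v}(t)^*\,dt .
\]
The decisive simplification is that the coordinate functions $v_j(t,\textbf{x})$ are precisely the singular functions appearing in the continuous SVD (\ref{eq:continuousSVD}), so by Theorem \ref{townsend_thm_1} they are orthonormal on $[0,T]$, i.e.\ $\int_0^T v_j v_k^*\,dt = \delta_{jk}$. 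Hence the Gram matrix on the left is the identity, it is trivially invertible, and the optimal map collapses to
\[
    T_{jk} = \int_0^T v_j'(t)\,v_k^*(t)\,dt = \langle v_j', v_k\rangle_t .
\]

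Next I would convert this cross-correlation into the claimed form using the generator identity of Theorem \ref{maintheorem}. Since $\Omega$ is exactly the trajectory segment $\{\textbf{F}_t(\textbf{x}) : t\in[0,T]\}$ and the SVD convolutional coordinates satisfy $w_j(\textbf{F}_t(\textbf{x})) = \sigma_j v_j(t)$, differentiating along the flow and invoking $\mathcal{K} w_j = \sum_k K_{jk} w_k$ gives the exact infinite-dimensional linear system
\[
    \sigma_j v_j'(t) = \frac{d}{dt} w_j(\textbf{F}_t(\textbf{x})) = \sum_{k=0}^\infty K_{jk}\,\sigma_k v_k(t),
\]
so that $v_j'(t) = \sum_{k=0}^\infty (\sigma_k/\sigma_j) K_{jk} v_k(t)$. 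Substituting this expansion into $T_{jk} = \int_0^T v_j' v_k^*\,dt$, interchanging sum and integral, and once more using orthonormality collapses the sum to its single surviving term and yields $T_{jk} = (\sigma_k/\sigma_j) K_{jk}$, as desired.

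Two points deserve care. The interchange of summation and integration must be justified, which follows from the absolute and uniform convergence guaranteed by Theorem \ref{townsend_thm_1}; the same uniform convergence legitimizes applying Theorem \ref{maintheorem} on $\Omega$. More importantly, and this is the conceptual heart of the statement, the part of the infinite-dimensional velocity field $v_j'$ that lies outside the span of $v_1,\dots,v_r$ contributes nothing to the regression: because the $v_k$ are mutually orthonormal, every tail term with $k>r$ is orthogonal to each regressor $v_k$ with $k\le r$ and therefore drops out of $\langle v_j', v_k\rangle_t$. This is exactly why the least-squares fit on the truncated coordinates reproduces the naive truncation $(\sigma_k/\sigma_j) K_{jk}$ of the exact dynamics, rather than acquiring correction terms from the discarded modes. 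I expect this orthogonality-driven decoupling to be the only genuinely delicate step, since it is what distinguishes the SVD basis from a generic basis; the remaining manipulations are routine, as the Gram matrix is the identity and no pseudoinverse subtleties arise.
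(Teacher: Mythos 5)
Your proposal is correct and follows essentially the same route as the paper: both reduce the least-squares problem via the orthonormality of the $v_j$ on $[0,T]$ (your normal-equations form is just the matrix version of the paper's componentwise stationarity condition $T_{jk}=\langle v_k, v_j'\rangle_t$), and both then substitute the expansion $v_j' = \sum_k (\sigma_k/\sigma_j)K_{jk}v_k$ from Theorem~\ref{maintheorem} and use orthonormality again to isolate the single surviving term. Your closing remark about why the discarded modes $k>r$ contribute no correction terms is a correct and worthwhile observation that the paper leaves implicit.
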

\begin{proof}
Varying $E_{S}(\bT)$ with respect to $T_{jk}$ gives
\begin{align*}
    \frac{\partial}{\partial T_{jk}} E_{S}(\bT) &= 2 \int_{0}^T (\bT\textbf{v}(t) - \textbf{v}'(t)) \cdot \delta_{jk} \textbf{v}(t)\,dt\\
    &= 2\int_0^T v_k(t) \bigg(\sum_{i = 1}^N T_{ji} v_i(t) - v_j'(t)\bigg)\,dt\\
    &= 2 T_{jk} - 2\int_{0}^T v_k(t) v_j'(t)\,dt
\end{align*}
Setting this to zero, we obtain
\begin{align*}
    T_{jk} &= \langle v_k, v_j' \rangle_{t}
\end{align*}
Noting that $v_j = \frac{w_j}{\sigma_j}$, and applying theorem \ref{maintheorem}, we obtain
\begin{align*}
    T_{jk} &= \langle v_k, \sum_{i = 1}^\infty \frac{\sigma_i}{\sigma_j} K_{ji} v_i \rangle_t\\
    &= \frac{\sigma_k}{\sigma_j} K_{jk}
\end{align*}
\end{proof}

\begin{remark}
This result holds identically for the linear map minimizing $\Vert \bT\mathbf{v}(t) - \mathbf{v}(t + \Delta t)\Vert_2$, with the discrete time evolution coefficients given in \eqref{eq:convdiscretedynamics}, provided the assumptions of theorem \ref{thm:analytictheorem} hold. This linear map coincides with the map estimated by DMD on the convolutional coordinates.
\end{remark}
\begin{remark}
This result also shows that the closest linear approximation of the associated convolutional coordinates $w_j(\textbf{F}_t(\textbf{x})) = \sigma_j v_j(t,\textbf{x})$ or $\bw = \bSigma\bv$, are given simply by $\bSigma^{-1} \bT \bSigma = \textbf{K}$, the exact rank-$r$ truncation of the analytical form of the Koopman operator in (\ref{eq:convdifferentialdynamics}).
\end{remark}

Another useful property of the SVD convolutional coordinates is that the Koopman eigenfunctions and eigenvalues of systems that admit a finite Koopman mode decomposition can be exactly recovered from the projections of the dynamics onto a finite set of singular vectors. Suppose that $\mathcal{K}_t \textbf{g}(\textbf{x}_0)$ admits the eigenvalue decomposition
\begin{equation}
    \mathcal{K}_t \textbf{g}(\textbf{x}_0) = \sum_{j=0}^{r} \mathbf{a}_{j} e^{\lambda_j t}
\end{equation}
where the set of eigenvalues $\{\lambda_j\}_{j=0}^r$ 
is finite, i.e. has cardinality $|\{\lambda_j\}_{j=0}^r| = r$.
For most observables on general nonlinear systems, even those in possession of a pure point spectrum, there will be no finite $r$ for which this relationship holds, except in the special case where $g$ is a finite linear combination of Koopman eigenfunctions. However, even if the sum is infinite, the coefficients $\mathbf{a}_{j}$ will generally vanish as $j \to \infty$, so we can obtain arbitrarily good approximations of these systems with large but finite truncations of the Koopman mode decomposition. To obtain the time-delay embedding of the trajectory, we compute $\mathcal{K}_t \textbf{g}(\textbf{x}_0)$:
\begin{equation}
    \mathcal{K}_{t+s} \textbf{g}(\textbf{x}_0) = \sum_{j=0}^{r} \mathbf{a}_{j} e^{\lambda_j t} e^{\lambda_j s}.
\end{equation}
Thus, in the delay-embedded space, the dynamical evolution is given by $\sum_{j=0}^{r} \bb_j e^{\lambda_j t}$, where $\bb_j = \mathbf{a}_{j} e^{\lambda_j s}$. Thus the delay-embedded state lies in the span of $\{e^{\lambda_j s}\}_{j=0}^r$. This is a finite-dimensional subspace and thus these windows are also in span of the first $|\{\lambda\}|$ singular vectors. Since the singular vectors and the exponential vectors are related by a finite change of basis, it follows the dynamics (and in particular the spectrum) of the system are encoded exactly in the map $\bT$ of these convolutional coordinates. Furthermore, if the eigendecomposition of $\bT$ is given by $\bT = \bP\pmb{\Lambda}\bP^{-1}$, then the functions
\begin{equation}
    b_j(\mathbf{x}) = (\bP^{-1}\bw(\mathbf{x}))_j
\end{equation}
are eigenfunctions of the Koopman operator with associated eigenvalue $\lambda_j$. These can also be rewritten as follows:
\begin{equation}
    b_j(\mathbf{x}) = \int_{-\tau}^\tau \big\langle \sum_{k=0}^{\infty} (\bP^{-1})_{jk}\mathbf{u}_k(s), \mathcal{K}_{s}g(\textbf{x})\big\rangle\,ds.
\end{equation}
The eigenfunctions of the Koopman operator can thus be interpreted as convolutional coordinates of an associated {\em eigenfilter} basis $\sum_{k=0}^{r} (\bP^{-1})_{jk} \mathbf{u}_k(s)$ with the system trajectory. Note that in general these filters are not orthogonal, and thus the results of theorem \ref{maintheorem} are not applicable in this case.

While this result is useful, we may also be interested in understanding the SVD coordinate approximations to the Koopman operator without restrictions to cases where the system has purely discrete spectra. The following theorem helps guide our understanding of the structure of the resultant approximations: 

\begin{theorem}\label{thm:SVDantisymmetric} Consider any $\textbf{x}$, and let $\mathbf{g}\circ \mathbf{F}_t(\textbf{x})$ be bounded for all $t \in [-\tau,T+\tau]$. Let $\textbf{u}_j, \sigma_j$ and $v_j$ be the singular value decomposition of $\mathbf{g} \circ \mathbf{F}_{t+s}(\textbf{x})$ on $[-\tau, \tau] \times [0,T]$. Suppose in the limit as $T \to \infty$ that $\sigma_j \to \infty$ holds for the first $n$ singular values, and suppose that the limit $\sigma_{j}/\sigma_k$ holds for all singular values $\sigma_j, \sigma_k$ in the first $n$ singular values. Then in the limit as $T\to \infty$ the map given in (\ref{eq:AVcomponents}) on the first $n$ SVD convolutional coordinates is antisymmetric.
\end{theorem}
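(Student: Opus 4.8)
\emph{Proof plan.} The plan is to show that the \emph{symmetric} part of the map $\bT$ collapses to a boundary term in the temporal singular functions $v_j$, and that this term is forced to zero by the blow-up of the singular values. First I would recall the intermediate identity established midway through the proof of Theorem~\ref{thm:DMDconvconnection}: setting the variation of $E_S$ to zero, before any reference to $K_{jk}$, yields that the least-squares map satisfies
\[
T_{jk} = \langle v_k, v_j' \rangle_t = \int_0^T v_j'(t)\,v_k(t)\,dt,
\]
which is just the statement that the coefficients of $v_j'$ in the orthonormal basis $\{v_k\}$ on $[0,T]$ are the inner products $\langle v_j', v_k\rangle$. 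From this representation the symmetric part of $\bT$ telescopes:
\[
T_{jk} + T_{kj} = \int_0^T \big(v_j'(t) v_k(t) + v_k'(t) v_j(t)\big)\,dt = \int_0^T \frac{d}{dt}\big(v_j(t) v_k(t)\big)\,dt = v_j(T)v_k(T) - v_j(0)v_k(0).
\]
Thus all non-antisymmetric content of $\bT$ is concentrated at the two temporal endpoints $t=0$ and $t=T$.

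Second, I would bound these endpoint values using the uniform boundedness of the convolutional coordinates. Since $w_j(\mathbf{F}_t(\mathbf{x})) = \sigma_j v_j(t)$ and, by Cauchy--Schwarz together with $\|\mathbf{u}_j\|_{L^2}=1$,
\[
|w_j(\mathbf{F}_t(\mathbf{x}))| = \Big| \int_{-\tau}^\tau \mathbf{u}_j(s)\cdot \mathcal{K}_s \mathbf{g}(\mathbf{F}_t(\mathbf{x}))\,ds \Big| \le \Big(\int_{-\tau}^\tau |\mathbf{g}(\mathbf{F}_{t+s}(\mathbf{x}))|^2\,ds\Big)^{1/2} \le \sqrt{2\tau}\,M,
\]
where $M := \sup_t |\mathbf{g}(\mathbf{F}_t(\mathbf{x}))|$ is finite by hypothesis, we obtain the pointwise bound $|v_j(t)| \le \sqrt{2\tau}\,M/\sigma_j$ valid at \emph{every} $t \in [0,T]$, and in particular at the endpoints. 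Letting $T \to \infty$, the assumption $\sigma_j \to \infty$ for the first $n$ singular values forces $|v_j(0)|, |v_j(T)| = O(1/\sigma_j) \to 0$, so $T_{jk} + T_{kj} \to 0$ for all $j,k \le n$. The assumption that the ratios $\sigma_j/\sigma_k$ converge ensures that each $T_{jk} = (\sigma_k/\sigma_j)K_{jk}$ tends to a well-defined limit, so that ``the map is antisymmetric'' refers to a genuine limiting matrix, whose symmetric part we have just shown vanishes.

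\emph{Main obstacle.} The delicate step is the second one: the normalization $\int_0^T v_j^2\,dt = 1$ only controls the \emph{average} size of $v_j$ (of order $1/\sqrt{T}$) and says nothing a priori about its pointwise values at the endpoints, which are exactly what the boundary term requires. The Cauchy--Schwarz estimate on $w_j = \sigma_j v_j$ is the device that converts the $L^2$ normalization into genuine pointwise control, and it is here that the boundedness of the trajectory and the blow-up $\sigma_j \to \infty$ combine to do the real work. A secondary point is justifying that the $v_j$ are differentiable and that the telescoping integration by parts is legitimate; this rests on the smoothness/analyticity assumptions inherited from Theorems~\ref{maintheorem} and~\ref{thm:analytictheorem}, under which the continuous SVD $\mathbf{g}\circ\mathbf{F}_{t+s}(\mathbf{x}) = \sum_i \sigma_i \mathbf{u}_i(s) v_i(t)$ may be differentiated in $t$.
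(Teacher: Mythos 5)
Your proposal is correct and follows essentially the same route as the paper: the paper likewise uses $T_{jk}=\langle v_k, v_j'\rangle_t$ to telescope $T_{jk}+T_{kj}$ into the boundary term $v_j(T)v_k(T)-v_j(0)v_k(0)$, and then kills that term with the Cauchy--Schwarz bound $|v_j(t)|\le \|\mathbf{u}_j\|\,\|\mathcal{K}_{s+t}\mathbf{g}(\mathbf{x})\|/\sigma_j$ together with $\sigma_j\to\infty$. Your write-up is, if anything, slightly more explicit than the paper's about the pointwise-versus-$L^2$ subtlety and about the role of the convergent ratios $\sigma_j/\sigma_k$ in making the limiting matrix well defined.
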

\begin{proof}
Consider the product $v_j(t)v_k(t)$ for $j, k \leq n$. Differentiating this gives
\begin{align*}
    \frac{d}{dt} v_j v_k &= v_j' v_k + v_k' v_j.
\end{align*}
Integrating both sides from 0 to $T$ and substituting the coefficients (\ref{eq:AVcomponents}), we are left with
\begin{equation}
    v_j(T)v_k(T) - v_j(0) v_k(0) = T_{jk} + T_{kj}.
\end{equation}
The coordinate $v_j(t)$ is normalized over the interval $[0,T]$, and the normalization factor is given by $\frac{1}{\sigma_j}$. Thus the following bound holds:
\begin{align*}
    |v_j(t)| &= \bigg|\frac{\int_{-\tau}^\tau \langle \mathbf{u}_j(s), \mathcal{K}_{s + t} \textbf{g}(\textbf{x})\rangle\,ds}{\sigma_j}\bigg|
    \leq \frac{1}{\sigma_j}\Vert \mathbf{u}_j \Vert \Vert \mathcal{K}_{s + t} \textbf{g}(\textbf{x}) \Vert.
\end{align*}
Since $\Vert \mathbf{u}_j \Vert$ and $\Vert \mathcal{K}_{t + s} \textbf{g}(\textbf{x}) \Vert$ are uniformly bounded for all time lengths $T$, it follows that $|v_j| \to 0$ as $T \to \infty$. Therefore in the limit as $T \to \infty$ we obtain
\begin{equation}
    T_{jk} = -T_{kj}.
\end{equation}
\end{proof}

One disadvantage of using principal components is that their average magnitude will depend on the estimation length $T$ due to the normalization condition. This is problematic because as $T\to\infty$ these trajectories tend to zero magnitude. Instead, we will usually want to model the respective convolutional coordinates, which are the non-normalized principal components. The linear model estimated for this system is related to the linear model for the principal components by the change of basis relation $\hat{\bT} = \bSigma^{-1} \bT \bSigma$, or in component form $\hat{T}_{jk} = K_{jk} = T_{jk} \frac{\sigma_j}{\sigma_k}$. This transformed matrix will generally not be antisymmetric; however, the spectrum of $\hat{\bT}$ will be identical to the spectrum of $\bT$, which converges to the imaginary axis in the limit as $T \to \infty$. This result indicates that SVD coordinates may be effective at approximating systems with Koopman operators with purely imaginary spectra.

These properties give us the following corollary, which may have useful implications for understanding the spectral quality of these approximations to the Koopman operator:
\begin{corollary}
Let $i\alpha_1, i\alpha_2, \cdots, i\alpha_r$ with $\alpha_i \leq \alpha_j$ if $i < j$ be the DMD eigenvalues for the first $r$ convolutional coordinates, and let $i\beta_1, i\beta_2, \cdots, i\beta_{r+1}$ with $\beta_i \leq \beta_j$ if $i \leq j$ be the DMD eigenvalues for the first $r + 1$ convolutional coordinates. It follows from Cauchy's interleaving theorem that
\begin{equation}
    \beta_i \leq \alpha_i \leq \beta_{i+1}.
\end{equation}
\end{corollary}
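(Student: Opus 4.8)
The plan is to recognize the two DMD maps as Hermitian matrices, one a principal submatrix of the other, and then invoke the stated interleaving theorem directly; the only real work is bookkeeping the sign and ordering conventions that arise when passing from a skew-symmetric generator to a Hermitian one. By the remarks following Theorem \ref{thm:DMDconvconnection}, the DMD map on the convolutional coordinates coincides with the least-squares map $\bT$, so the DMD eigenvalues in the statement are precisely the eigenvalues of $\bT$ restricted to the first $r$ (resp.\ $r+1$) coordinates.

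First I would observe that the optimal map of Theorem \ref{thm:DMDconvconnection} decouples entrywise: the stationarity condition produced $T_{jk} = \langle v_k, v_j'\rangle_t$, which depends only on the fixed SVD coordinate functions $v_j, v_k$ and not on how many coordinates are retained. Consequently, writing $\bT^{(r)}$ for the map on the first $r$ coordinates and $\bT^{(r+1)}$ for the map on the first $r+1$, the matrix $\bT^{(r)}$ is exactly the leading $r\times r$ principal submatrix of $\bT^{(r+1)}$, obtained by deleting the last row and column. This is the substantive structural input, and it is free given the form of the optimal $\bT$.

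Next I would pass to the $T\to\infty$ limit and apply Theorem \ref{thm:SVDantisymmetric}: under its hypotheses both $\bT^{(r)}$ and $\bT^{(r+1)}$ are real antisymmetric, so $\iu\bT^{(r)}$ and $\iu\bT^{(r+1)}$ are Hermitian, and $\iu\bT^{(r)}$ remains a principal submatrix of $\iu\bT^{(r+1)}$. A skew-symmetric matrix with eigenvalues $\{\iu\alpha_j\}$ yields, after multiplication by $\iu$, the Hermitian matrix with the real eigenvalues $\{-\alpha_j\}$; likewise the eigenvalues of $\iu\bT^{(r+1)}$ are $\{-\beta_j\}$. Cauchy's interleaving theorem then applies verbatim to this Hermitian pair.

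The step requiring care is the translation of the interleaving inequality back to the $\alpha_i,\beta_i$. Since multiplication by $\iu$ sends $\iu\alpha_j \mapsto -\alpha_j$, the ascending order of the Hermitian eigenvalues reverses the ordering of the $\alpha_j$ (and $\beta_j$): the smallest Hermitian eigenvalue is $-\alpha_r$, and so on. Writing Cauchy's inequalities $\mu_i \le \theta_i \le \mu_{i+1}$ for the sorted eigenvalues of the larger and smaller matrices, substituting $\mu_i = -\beta_{r+2-i}$ and $\theta_i = -\alpha_{r+1-i}$, and re-indexing by $j = r+1-i$, collapses the chain to $-\beta_{j+1}\le -\alpha_j \le -\beta_j$, i.e.\ $\beta_j \le \alpha_j \le \beta_{j+1}$, which is exactly the claim. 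The main obstacle is thus not analytic but combinatorial: keeping the index reversal consistent so that the imaginary eigenvalues inherit the correct interlacing direction.
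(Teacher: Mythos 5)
Your proof is correct and follows exactly the route the paper intends: the paper states the corollary as an immediate consequence of Theorem \ref{thm:SVDantisymmetric} and Cauchy's interleaving theorem, and your write-up supplies precisely the missing details (the entrywise formula $T_{jk}=\langle v_k,v_j'\rangle_t$ making the truncations nested principal submatrices, passing to the Hermitian matrices $\iu\bT$, and the index bookkeeping). The only point worth noting is that since $\bT$ is real antisymmetric its spectrum is symmetric about the origin, so the index reversal you track so carefully is actually a no-op, but your version is valid as written.
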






\subsection{A Conjecture on the Error of HAVOK Approximations to the Koopman Operator}

From the result of theorem \ref{thm:DMDconvconnection} we can write $E_{RMS}$ for the for the convolutional coordinates as
\begin{align*}
    E_{RMS} &= \frac{1}{T}\sqrt{\int_0^T \sum_{j = 1}^N \bigg(\sum_{k > N} K_{jk} w_j(t) \bigg)^2 dt}\\
    &= \frac{1}{T}\sqrt{\sum_{j = 1}^N \sum_{k > N} \sum_{l > N} \sigma_k \sigma_l K_{jk}K_{jl} \int_0^T v_j(t)v_l(t)\,dt}\\
    &= \frac{1}{T} \sqrt{\sum_{j = 1}^N \sum_{k > N} \sigma_k^2 K_{jk}^2}.
\end{align*}
The rate of convergence of this error is related to the respective rates of increase and decay of $K_{jk}^2$ and $\sigma_k^2$. It has been empirically observed that the singular values of many systems of interest decay exponentially with $k$. It has also been formally shown that the decay of the singular values is related to the smoothness of the function $\textbf{x}(t + p) = \textbf{F}_{t+p}(\textbf{x})$. We paraphrase theorem 7.1 from \cite{townsendtrefethen2015} which gives bounds on $\sigma_j$ in terms of the derivatives of $\textbf{x}(t + p)$:
\begin{theorem}
If, for some $v \geq 1$, the functions $x_t(p)$ have a $v$th derivative of variation uniformly bounded with respect to
$v$, or if the corresponding assumption holds with the roles of $t$ and $p$ interchanged, then the singular values
and approximation errors satisfy $\sigma_k = O(k^{-v})$. If, for some $\rho > 1$, the functions $\textbf{x}(t + p)$ can be extended
in the complex $t$-plane to analytic functions in the Bernstein $\rho$-ellipse scaled to $[-\tau,\tau]$ uniformly bounded
with respect to $p$, then the
singular values and approximation errors satisfy $\sigma_k = O(\rho^{-k})$.
\end{theorem}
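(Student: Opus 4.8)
Since this statement is quoted essentially verbatim as Theorem 7.1 of \cite{townsendtrefethen2015}, the cleanest route is to invoke that reference directly; however, the argument is short enough to reconstruct, and the plan is to do so via the standard route of bounding singular values by explicit low-rank approximants. The starting point is the continuous analogue of the Eckart--Young--Schmidt theorem already used implicitly in Theorem \ref{townsend_thm_1}: the kernel $f(s,t) = \mathbf{g}\circ\mathbf{F}_{t+s}(\mathbf{x})$ defines a Hilbert--Schmidt integral operator whose $(k+1)$st singular value equals the $L^2$ error of the best rank-$k$ approximation of $f$. Consequently it suffices to construct \emph{any} separable approximation $f_k(s,t) = \sum_{j=0}^{k} a_j(t)\,b_j(s)$ of rank $\le k+1$ and bound $\|f-f_k\|_{L^2}$; the resulting bound then transfers directly to $\sigma_{k+1}$, and the claim about the approximation errors is immediate because those errors are exactly $\sigma_{k+1}$.

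To build $f_k$ I would freeze the second variable and approximate in the first. For each fixed $t$, view $s\mapsto \mathbf{x}(t+s)$ as a one-variable function on $[-\tau,\tau]$ and replace it by its truncated Chebyshev expansion of degree $k$; collecting the Chebyshev coefficients (which depend on $t$) as the functions $a_j(t)$ and the Chebyshev polynomials as the $b_j(s)$ yields a rank-$(k+1)$ separable approximant. The truncation error is now governed purely by classical one-variable approximation theory applied uniformly in the parameter $t$. In the finite-smoothness case, a Jackson/Chebyshev-truncation estimate gives a pointwise-in-$t$ bound of order $k^{-v}$ whenever the $v$th derivative in $s$ has bounded total variation; the hypothesis that this variation is bounded \emph{uniformly} in $t$ is precisely what lets me integrate the squared pointwise error over $t\in[0,T]$ and conclude $\|f-f_k\|_{L^2} = O(k^{-v})$, hence $\sigma_k = O(k^{-v})$. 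In the analytic case, Bernstein's theorem says that if $s\mapsto\mathbf{x}(t+s)$ extends analytically and boundedly to the Bernstein $\rho$-ellipse, its Chebyshev coefficients decay like $\rho^{-k}$; the uniform-in-$t$ bound on these extensions again lets the pointwise geometric decay survive integration, giving $\sigma_k = O(\rho^{-k})$. The ``roles of $t$ and $p$ interchanged'' clause follows by running the same construction with the expansion performed in the other variable.

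The main obstacle is not the one-variable estimates themselves, which are textbook, but the passage from a bound that holds pointwise in the frozen variable to a bound on the Hilbert--Schmidt norm over the full rectangle: one must verify that the stated uniform-boundedness hypotheses (uniformly bounded $v$th-derivative variation, or uniformly bounded analytic continuation into the $\rho$-ellipse) are strong enough that the family of pointwise error bounds is dominated by an integrable function of the parameter, so that integrating over $[0,T]$ preserves the rate without accumulating a growing constant. A secondary technical point is to justify the continuous Schmidt decomposition and the Eckart--Young optimality in the Hilbert--Schmidt setting --- namely compactness of the integral operator associated with the bounded kernel $f$ --- before the reduction in the first step can be applied; for the bounded, Lipschitz kernels considered here this is standard, and I would simply cite the relevant compactness and min--max characterization of singular values.
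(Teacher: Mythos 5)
The paper offers no proof of this statement---it is quoted directly as Theorem 7.1 of \cite{townsendtrefethen2015}---so citing that reference, as you do, is exactly the paper's own route, and your reconstruction via Chebyshev separable approximants together with the Schmidt/Eckart--Young characterization of singular values is precisely the argument given in that reference. The one small imprecision is that $\sigma_{k+1}$ equals the best rank-$k$ approximation error in the \emph{operator} norm, while the $L^2$ (Hilbert--Schmidt) error of the best rank-$k$ approximant equals $\bigl(\sum_{j>k}\sigma_j^2\bigr)^{1/2}$; since $\sigma_{k+1}$ is bounded above by that tail sum, the inequality your construction actually delivers still yields the claimed decay rates, and the proof is sound.
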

While this addresses the question of the decrease of the singular values, the question of the increase in $K_{jk}^2 = \langle {\bf u}_j, {\bf u}_k'\rangle^2$ is not well studied. To our knowledge, no results currently exist on the smoothness of the functions ${\bf u}_k$, nor on how their derivatives ${\bf u}_k'$ scale with $k$. However, our empirical results are promising. In section 4, we will show empirically that the coefficient $K_{jk}$ appear to scale only polynomially with $k$. We conjecture that a polynomial bound exists on the derivatives of the singular components in terms of $k$ and thus that a polynomial or exponential bound exists on the total error $E_{RMS}$ in terms of $k$.

If proven, this result could have significant implications for Koopman theory. It has been shown that the projections of the Koopman operator onto generic $D$-dimensional subspaces of observables converges to the Koopman operator as $D \to \infty$ \cite{kordamezic2017JNS}. However, for finite $D$, the quality of these approximations is highly dependent on the choice of subspace, and for a generic family of observables it may be quite difficult to obtain convergence without exceedingly high dimensional states. A bound of the form conjectured above would guarantee that these representations converge quickly to a known precision in convolutional SVD coordinates for any (sufficiently smooth) dynamical system. We do not know of any other family of observables for which a similar bound exists.

\subsection{Universal Representations: SVD Convolutional Coordinates}

In general, the basis functions $\textbf{u}_j$ depend sensitively on the underlying system. In principle, one can construct convolutional coordinates using these basis functions for dynamical systems other than the one they were derived from, and the action Koopman operator for that system on those coordinates can still be represented by \ref{eq:convdifferentialdynamics}. However, these coordinates will no longer have the appealing properties of SVD convolutional coordinates.

There are, however, two interesting places where these coordinates exhibit universal behaviors: in the long-$\tau$ and short-$\tau$ limits. In the long-$\tau$ limit, it has been shown that, for a stationary time series, the SVD basis functions will converge to a Fourier basis \cite{ssavolcanic}. Conversely, in the short-$\tau$ limit, it has been shown that the SVD basis functions converge to a basis of Legendre polynomials \cite{Gibson1992phD}, provided the signal and its derivatives are bounded.

Unfortunately, while the SVD basis functions converge to these two bases in the short- and long-$\tau$ limits, the off-diagonal correlations $\langle w_j, w_k\rangle/\Vert{w_j}\Vert \Vert w_k \Vert$ generally remain $O(1)$ for Fourier and Legendre bases, even as the window is brought to its limits. Further corrections are needed to fully diagonalise the correlation matrix at any window length. In section \ref{subsec:lorenz}, we investigate the connections between the Legendre basis and the empirical SVD bases for the Lorenz system in the small-$\tau$ regime, and find that, while the corrections to remove off-diagonal correlations are significant, certain qualitative features of the model are related to the Legendre basis model. Namely, the Koopman representation in SVD convolutional coordinates is roughly an `antisymmetrization' of the Koopman representation in Legendre coordinates. In appendix \ref{ap:Legendre Basis}, we derive analytic expressions for the representation of the Koopman operator in the associated SVD convolutional coordinates for these two bases. Furthermore, we investigate the connections between the Legendre basis and the empirical SVD bases for the Lorenz system in section \ref{subsec:lorenz}.

\subsection{Computing SVD Convolutional Coordinates from Data}\label{sec:SVDcoordAlgo}
In applications, we generally do not have access to the full trajectory $\mathbf{x}(t)$, but instead discretely sampled signal $\bx_k:=\bx(t_k)$ sampled with timestep $\Delta t = t_{k+1}-t_k$. We employ the following numerical approximations to compute the quantities relevant to our theory and as demonstrated for the examples in Sec.~\ref{Sec:Applications}:
\begin{enumerate}
	\item We first construct the Hankel matrix~\eqref{eq:hankelsvd} from a sampled trajectory of the considered dynamical system. The dimension of this matrix is $DN \times M$, where $D$ is the number of observables, $N$ is the delay embedding dimension and $M$ is the number of snapshots. Then the SVD of this matrix is computed to obtain approximations of $\mathbf{u}_j(s)$ of (\ref{eq:continuousSVD}) and $v_j(t)$. In general, only the first few singular vectors $\mathbf{u}_j(s)$ will be well-approximated by the singular vectors $\mathbf{u}_j$, due to the exponential decay of the singular values. Instead of computing a full or thin SVD of $\bH$, we can instead compute a partial SVD of some rank $r \ll \min(N,M)$. This approach is highly efficient without sacrificing any information about the dynamics. We also suggest an alternative approach for computing these observables and their dynamics by approximating the autocorrelation function, which is discussed in \ref{ap:autocorrelation_approximation}.

	\item We can compute the matrix elements $K_{jk}$ either by computing $\frac{\sigma_j}{\sigma_k}\langle \bv_j, {\bv}_k'\rangle$ or $\langle \mathbf{u}_j, {\bu}_k'\rangle$. These approaches are of order $O(r^2M)$ and $O(r^2DN)$, respectively, and can be used when the measurement states are are high- and low-dimensional, respectively. We compute the derivatives ${\bu}_j'$ and ${\bv}_j'$ using numerical finite differencing.
	\item The dynamics of the convolutional coordinates are then approximated by
	\begin{equation}
	w_j(t) = \sum_{j = -\tau/\delta}^{\tau/\delta} \mathbf{u}_{j}\mathbf{y}_{t + k}.
	\end{equation}
\end{enumerate}

\section{Applications to Dynamical Systems}\label{Sec:Applications}

\subsection{Linear Systems}\label{sec:linsys}
\begin{figure}\label{fig:linresults}
    \centering
    \begin{overpic}[width=0.49\textwidth]{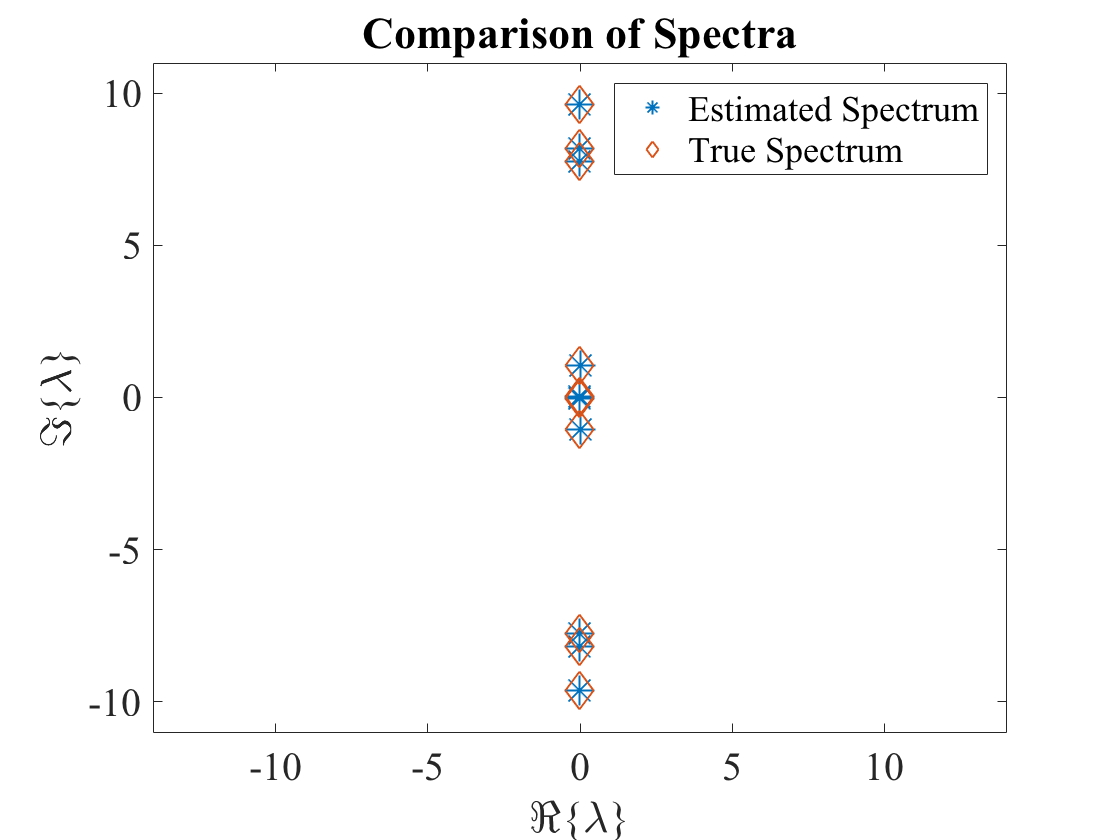}
    	\put(0,0){(a)}
    \end{overpic}
    \begin{overpic}[width=0.49\textwidth]{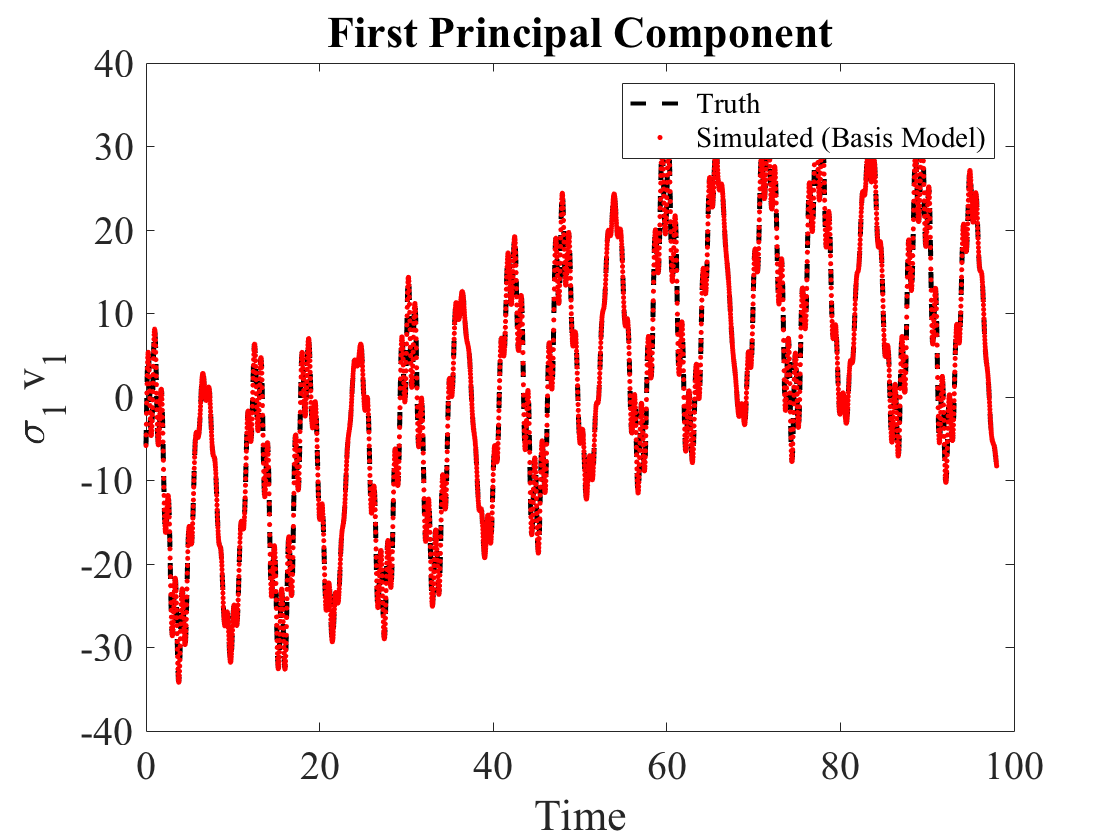}
    	\put(0,0){(b)}	
    \end{overpic}
    \caption{Delay embedding of a trajectory from a linear system with random imaginary eigenvalues: 
    	(a) reconstructed spectrum matches the true spectrum almost exactly, and
    	(b) the true trajectory and reconstructed trajectory of the first convolutional coordinate fir with negligible error. 
    }
    \label{fig:my_label}
\end{figure}
The simplest possible setting for a Koopman analysis using a SVD delay embedding is a finite-dimensional linear system. As established previously, the dynamics of these systems are spectrally identical to the dynamics in a finite number of convolutional coordinates. Thus, linear systems are useful for illustrating the main principle of this method, as well as for highlighting some of the issues that arise when working with discrete signals instead of continuous functions. In particular, we consider a number of numerical experiments with real trajectories taken from linear systems with random imaginary spectra. A single coordinate is measured and then delay embedded. The SVD observables and associated linear models are computed using the algorithm in Sec.~\ref{sec:SVDcoordAlgo}. We find that our method is able to exactly reconstruct these trajectories in almost all cases. A typical result is illustrated in Fig.~\ref{sec:linsys}.

A number of pathological cases exist for which our method does not perform as well as expected. The most fundamental case is that our method performs poorly if the frequencies in the dynamics are {\em close} with respect to $\tau$, i.e. when $(\omega_j - \omega_k)\tau$ is small for a subset of frequencies so that the eigenvalues are nearly degenerate. 
While these eigenvalues would be closely matched in the linear model, the simulated trajectory using this model would often fail to match that of the original system. This can be explained as an effect of poor conditioning. Recall that the normalized eigenvectors associated with a frequency $\omega$ are
\begin{align*}
    v = \frac{1}{\sqrt{2\tau}}e^{i\omega t}.
\end{align*}
The inner product between $v_j$ and $v_k$ is then given by
\begin{align*}
    \langle v_k, v_j \rangle &= \frac{1}{2\tau} \int_{-\tau}^\tau e^{-i\omega_k t}e^{i\omega_j t}\,dt = \frac{1}{2\tau}\frac{e^{i(\omega_j - \omega_k)t}}{i(\omega_j - \omega_k)} \bigg|_{-\tau}^\tau = \sinc((\omega_j - \omega_k)\tau).
\end{align*}
As $(\omega_j - \omega_k)\tau \to 0$ this term goes to 1, indicating that the eigenspaces converge. In this regime, the linear system is ill-conditioned, and small errors in the estimation of the eigenvectors can propagate catastrophically. Then, it may be necessary to avoid estimating the model directly from the SVD basis in order to obtain a more precise estimate. More generally, if the $r$ eigenvectors occupy a smaller and smaller volume of state space, then the variance of their distribution becomes smaller and the singular values decay rapidly. This spectral crowding, i.e. many closely spaced eigenvalues, makes it difficult to resolve the dynamics of the system when the eigenvectors are close together. In such a regime, for improved robustness, we recommend computing DMD model from the full trajectories of the convolutional coordinates $w_j$, rather than the basis vectors $\mathbf{u}_j$.

\subsection{The Van der Pol Oscillator}
\begin{figure}
    \centering
    \includegraphics[width=\textwidth]{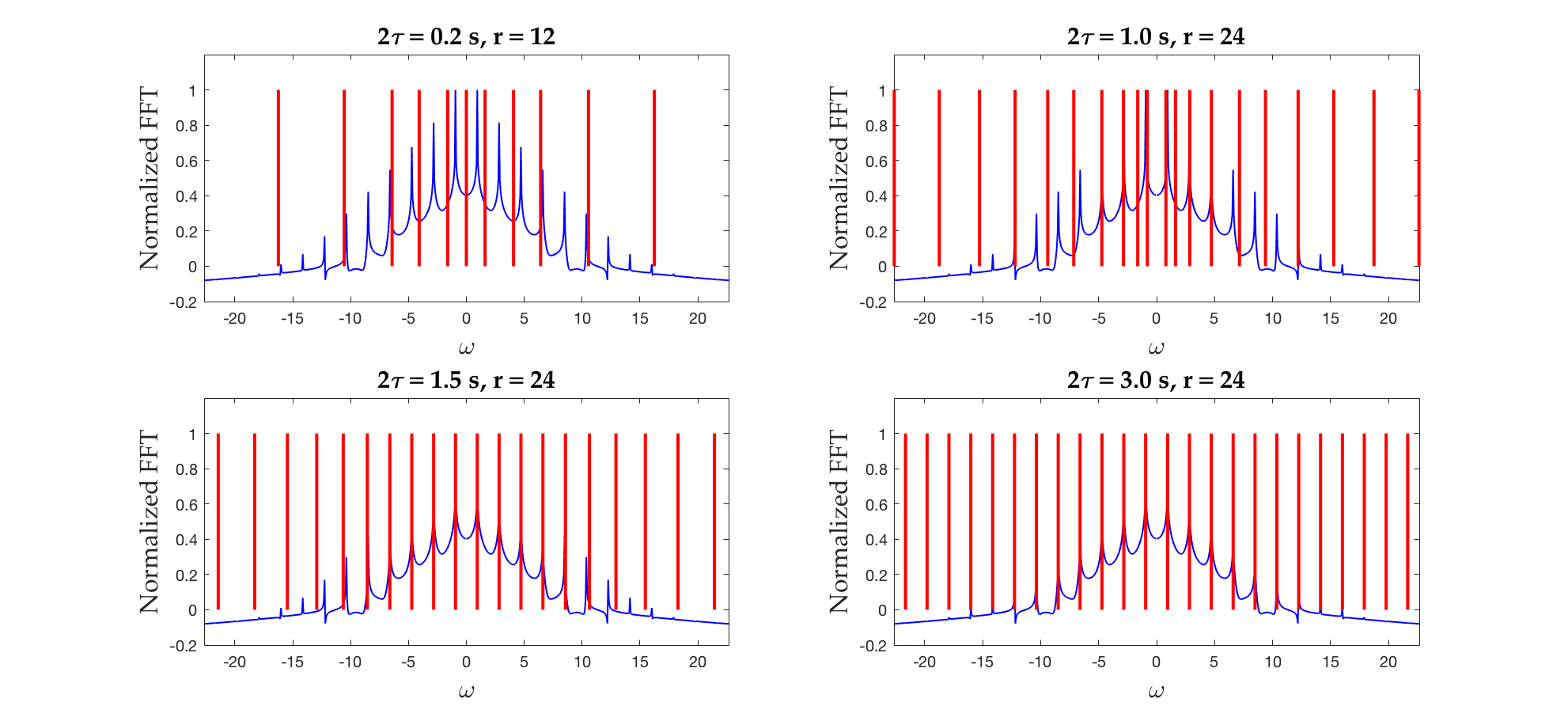}
    \caption{Comparison of the spectrum of the SVD coordinate operators on the Van der Pol attractor for $\mu = 1$ and $2\tau = 0.2, 1.0, 1.5$ and $3.0$. The Fourier transform of the trajectory is plotted in blue, and the locations of the imaginary component of the eigenvalues of the SVD coordinate operators are marked as red vertical lines. The spectrum of the operator matches the peaks in the Fourier spectrum for sufficiently large $\tau$. For smaller $\tau$, the spectrum appears distorted. This is likely related to the poor scaling properties of the eigenvectors discussed in section \ref{sec:linsys}.}
    \label{fig:vanderPolspectralcomparison}
\end{figure}
The van der Pol system is a nonlinear second-order differential equation:
\begin{equation}
    \frac{d^2x}{dt^2} - \mu(1 - x^2)\frac{dx}{dt} + x = 0.
\end{equation}
In the small-$\mu$ limit, the van der Pol system reduces to a harmonic oscillator. For positive $\mu$, a trajectory starting off the attractor decays asymptotically onto a limit cycle in the phase space spanned by $x$ and $\dot{x}:=\frac{d}{dt}x$. Since the limit cycle is periodic, we expect the spectrum of the Koopman operator on the attractor to be discrete integer multiples of the fundamental frequency $\omega$. Since the spectrum is also discrete, we expect that the SVD delay embedding method should be able to exactly reconstruct these frequencies. 

Importantly, nonlinearity in dynamical systems manifests two critical phenomenon:  (i) the production of harmonic frequencies, and (ii) shifts in the underlying frequencies as a function of the strength of the nonlinearity.  As will be shown by our time-delay embedding, the SVD coordinate system accurately extracts these manifestations.  For the van der Pol system, a classical asymptotic expansion in the weakly nonlinear limit using a Poincar\'e-Lindstedt expansion~\cite{bender2013advanced,kevorkian2013perturbation} with a {\em stretched} time coordinate is given by 
\begin{subeqnarray}
   && \tau=\omega(\epsilon)t=(\omega_0 +\epsilon \omega_1 + \hdots ) t,\\
   && x = x_0 + \epsilon x_1 + \epsilon^2 x_2 + \hdots ,
\end{subeqnarray}
where $\epsilon=\mu\ll 1$.
The Fredholm-Alternative theorem allows us to determine the asymptotic corrections to the leading order sinusoidal oscillations so that
\begin{equation}
  x(t) \approx 2 \cos \left[ (1 + 7\epsilon^2/16) t \right] + \epsilon \left[ \frac{3}{4} \sin \left[ (1 + 7\epsilon^2/16) t \right] 
  - \frac{1}{4} \sin \left[ 3 (1 + 7\epsilon^2/16) t \right] \right] + O(\epsilon^2) \, .
\end{equation}
Such asymptotic expansions not only allows one to compute the frequency shifts imposed by the nonlinearity,
i.e. from $\omega=1$ to $ \omega=(1 + 7\epsilon^2/16)$, but it also reveals the production of harmonics (the Koopman spectrum), as illustrated by the $\sin[3\omega(\epsilon)t]$ term at $O(\epsilon)$ which is generated by the cubic nonlinearity.  At $O(\epsilon^2)$, the nonlinearity generates a $\sin[5\omega(\epsilon)t]$ contribution and further corrections to the frequency.  Although asymptotic expansions are insightful, they are only valid in the weakly nonlinear regime.  The Koopman embedding proposed here can accurately compute the frequency shifts and harmonics (spectrum) generated from the nonlinearity even in the strongly nonlinear regime, allowing for improved analytic insight into nonlinear dynamical systems.

The van der Pol oscillator is simulated on the attractor for 100 time units with a time step of $\Delta t = 0.001$. We examine parameters $\mu = 0.1, 0.5, 1.0$ and $3.0$, which represent different regimes ranging from weakly to strongly nonlinear dynamics. In all of these cases, the eigenvalues of the SVD coordinate operators matched the dominant spectral peaks of the system, provided the delay embedding window $2\tau$ was chosen to be large enough. However, for sufficiently small $\tau$, the estimated spectrum did not match the peaks in the Fourier spectrum. This is likely related to the pathological behaviors with spectral crowding identified in section \ref{sec:linsys}. In particular, the stronger the nonlinearity $\mu$, the larger the window size needed to adequately represent the spectrum. This was likely due to two related phenomena: The peaks of the higher-order frequencies increase as the nonlinearity increases, resulting in more {\em active} frequencies and thus more crowding. Furthermore, the fundamental frequency $\omega$ decreases as $\mu$ increases, exacerbating the spectral crowding issue. A topic of further study will be to relate the minimum window size $2\tau$ to the magnitude of the nonlinearity.

\begin{figure}
    \centering
    \begin{overpic}[width=0.65\textwidth]{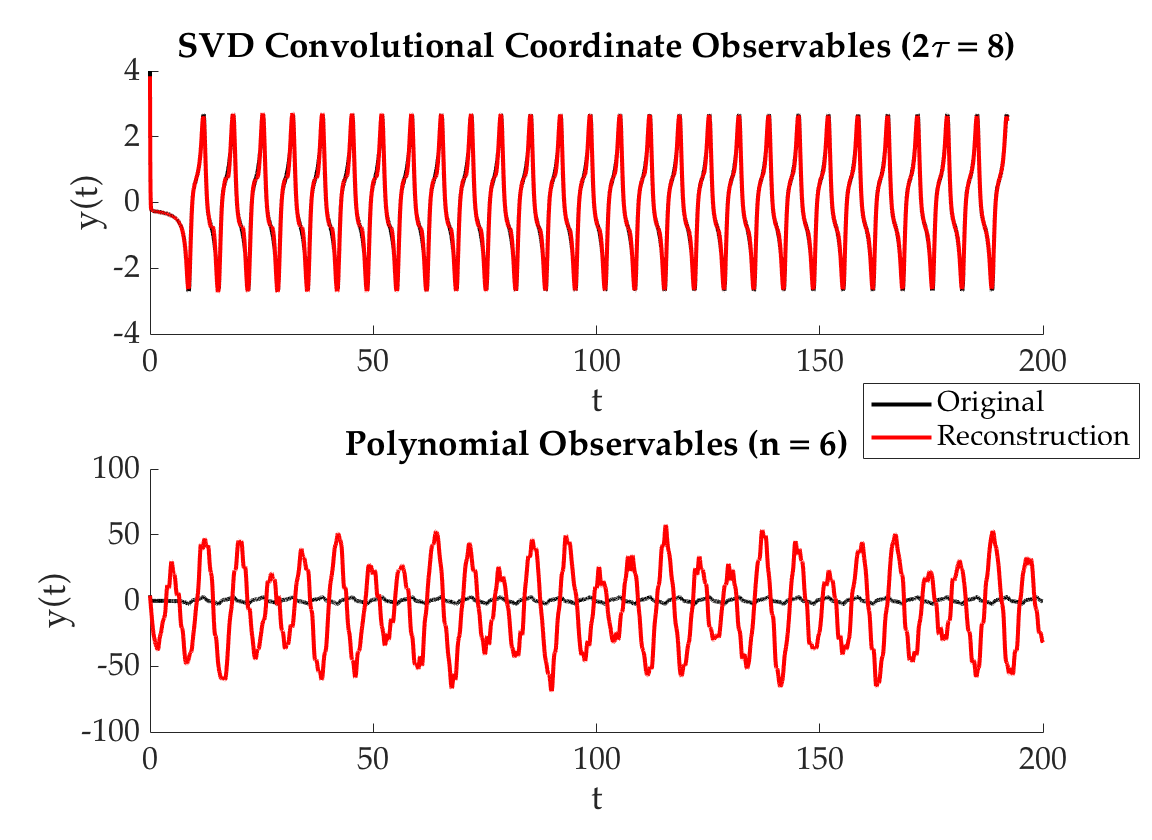}
    	\put(0,40){(a)}
    	\put(0,5){(b)}
    \end{overpic}	
    \caption{EDMD reconstruction of a trajectory from off the van der Pol attractor ($\mu = 1.0$) with (a) 28 SVD Convolutional coordinate observables, $2\tau = 8$, and (b) using polynomial observables up to order 6 (28 terms).}
    \label{fig:vanderpolreconstruct}
\end{figure}

Trajectories with initial conditions off of the attractor are analyzed. In this domain DMD-type linear models are not expected to perform well, since the spectrum of the Koopman operator is continuous in this regime, and thus no finite-dimensional linear model exists for these dynamics. The system is simulated with a randomly drawn initial condition off of the attractor and $\mu = 1.0$. For comparison, EDMD is applied to both a set of polynomial observables of order 6 and a set of SVD convolutional coordinates for varying window lengths. Instead of performing a delay-embedding with a single coordinate, we constructed the convolutional coordinates by delay embedding both the coordinates, $x$ and $\dot{x}$. The reconstruction achieved using polynomial observables appears uniformly poor. Surprisingly, we found that by taking a similar number of SVD convolutional coordinates, we could reconstruct the trajectory exactly, despite the continuous spectrum. This result is illustrated in Fig.~\ref{fig:vanderpolreconstruct}. The result is suggestive, but comes with the caveat that the window size necessary for such an accurate reconstruction is almost as long as the period of time it takes for the trajectory to decay onto the attractor. Nevertheless, this approach shows promise and may lead to future directions for approximating non-chaotic systems with continuous Koopman operator spectra.

\subsection{Nonlinear Schr\"odinger Equation}
In the previous examples, applications of SVD convolutional coordinates to dynamical systems of only low state dimension are considered. However, these methods can be applied to systems of arbitrary, possibly infinite dimension. As an example, we consider the nonlinear Schr\"odinger equation in one spatial direction:
\begin{equation}\label{eq:nls}
    iu_t + \frac{1}{2} u_{xx} + |u|^2 u = 0.
\end{equation}
This equation is nonlinear but admits soliton solutions that exhibit quasiperiodic behavior in time, and as such presents an interesting test case for Koopman spectral methods. Kutz et. al.~\cite{PDEKoopman_kutzetal2018complexity} studied these solutions using several DMD-based algorithms. They found that kernel-DMD methods generally performed quite poorly for a wide variety of kernel functions. This result was surprising, given the simple structure present in the soliton solutions. This negative result shows the importance of choosing a good subspace of observables for accurate reconstruction. They found further that by augmenting the state with the nonlinear observable $|u|^2u$, which was motivated from the nonlinearity appearing in the original equation, they were able to achieve reconstruction with high accuracy outperforming all other choices of observables.

\begin{figure}
    \centering
    \begin{overpic}[width=.9\textwidth]{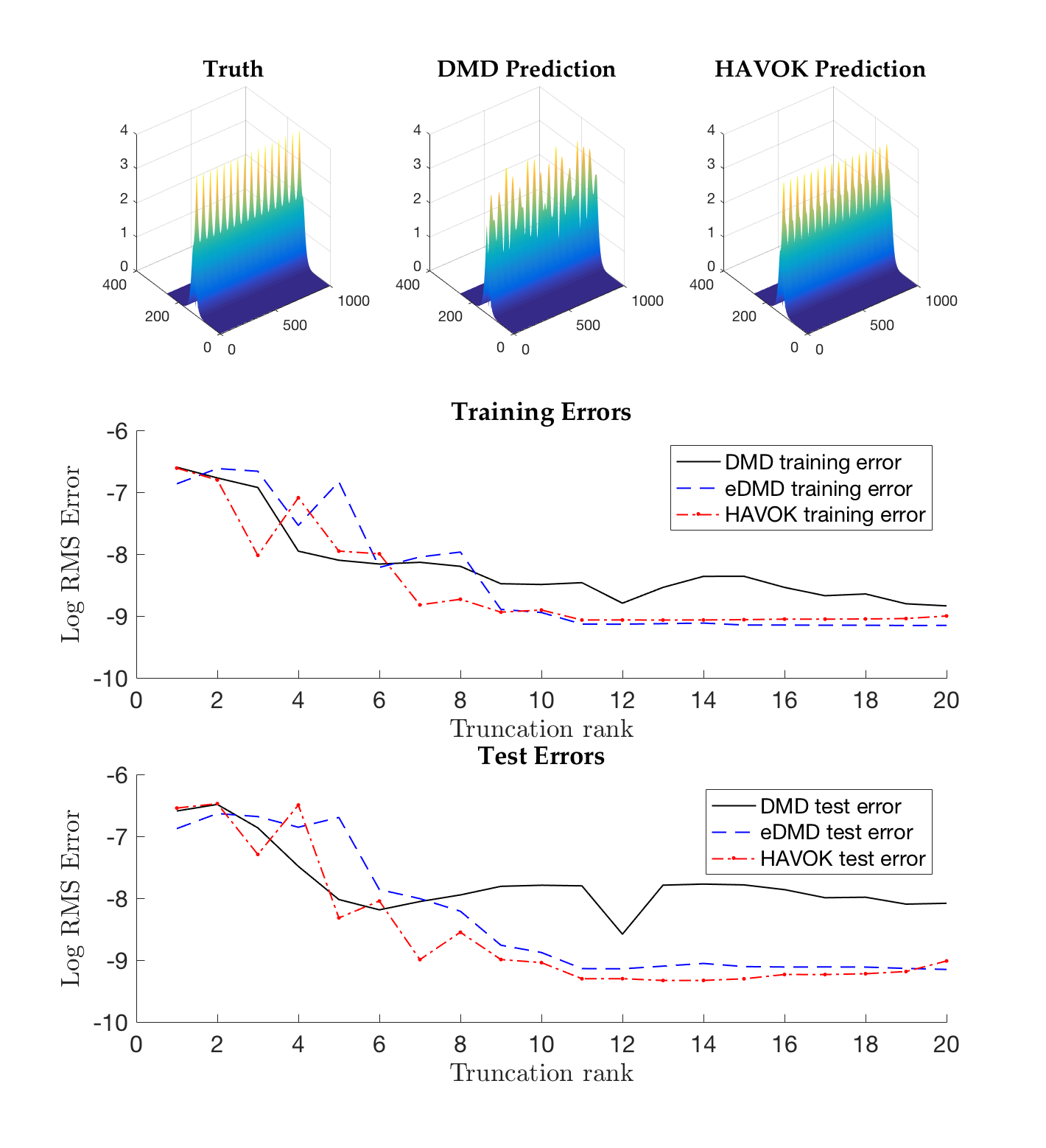}
    	\put(2,70){(a)}
    	\put(2,5){(b)}
    \end{overpic}	
    \vspace{-.4in}
    \caption{Prediction of the nonlinear Sch\"rodinger equation:
    	(a) True and predicted NLS solution using truncation rank $r = 14$. HAVOK achieves notably better reconstruction than DMD on linear observables. 
    	(b) Comparing the RMS prediction error of DMD, eDMD and HAVOK on the training and test data for increasing truncation rank. The SVD convolutional coordinate observables have similar training performance as eDMD with the physically motivated observable $u|u|^2$, and have the lowest test error of any of the methods (achieved at $r = 14$). Interestingly, the HAVOK test error increases after $r = 14$, while the eDMD test error remains steady.
  }
    \label{fig:nls_truncationrank_error}
\end{figure}

The nonlinear Schr\"odinger equation is simulated with the initial condition
\begin{equation}
    u(x,0) = 2\text{sech}(x),
\end{equation}
which is known to generate a soliton solution. The data is sampled across the spatial domain $x \in [-15,15]$, over a time domain of $t \in [0,16\pi]$, with 2000 time snapshots. 
The data is then splitted into a training set, on which the models are trained, and a test set, on which the models are validated by estimating the predictive accuracy of each method.
Due to the poor performance of kernel-DMD on these solutions, we chose instead to benchmark our approach using conventional DMD and EDMD with the system-motivated observable $|u|^2u$. 
We computed the reconstruction error of each model for different choices of the truncation rank $r$.

Summarizing the results, the HAVOK method accurately extracts the quasiperiodic dynamics of the system. In contrast, the DMD with linear observables is able to capture qualitatively the overall periodicity but is not quantitatively accurate and suffered from poorer performance on the test set than on the training set. The HAVOK model achieves the lowest RMS error of any of the methods tested, which was achieved at $r = 14$. Interestingly, the HAVOK test error increases after $r = 14$, while the eDMD test error remains steady. Overall, the performance of the HAVOK method is generally comparable with the performance of eDMD with the physically motivated observable $u|u|^2$. The SVD convolutional coordinates are not chosen using a priori physical knowledge, so the fact that they achieved comparable or better performance is encouraging.

\subsection{Understanding Intermittent Forcing in the Lorenz System}\label{subsec:lorenz}
\begin{figure}
    \centering
    \includegraphics[width=0.75\textwidth]{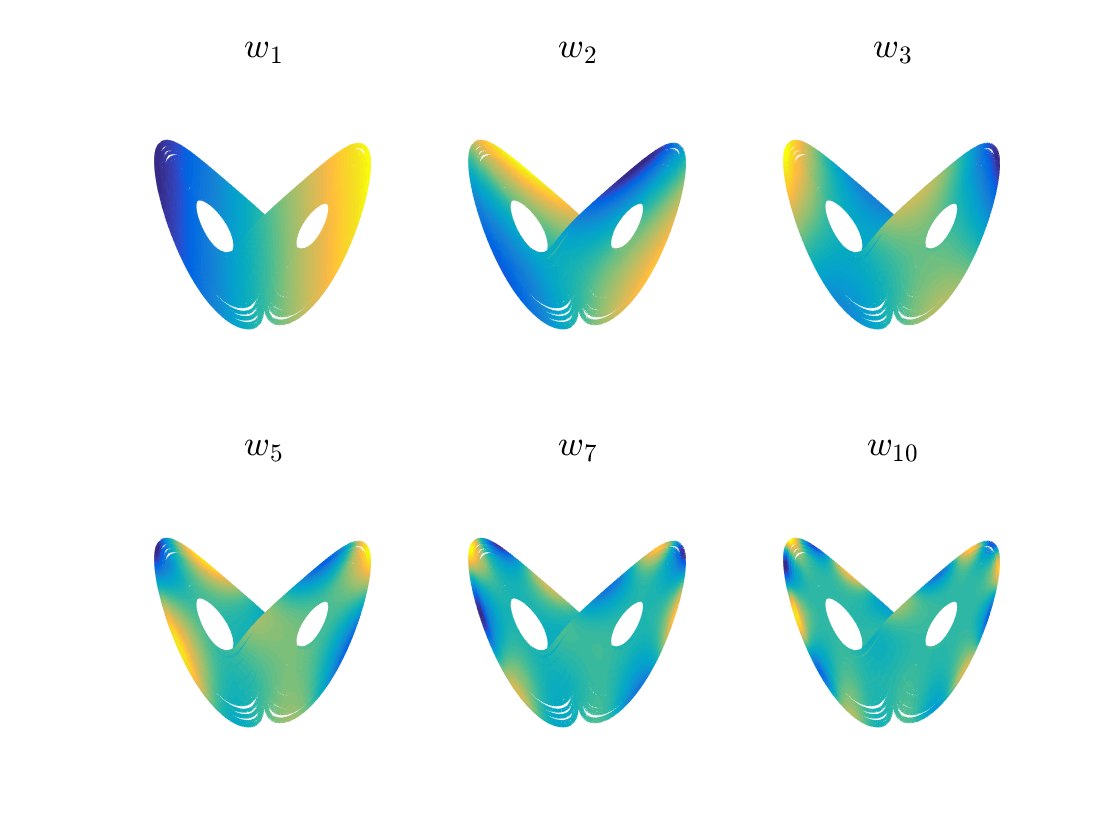}
    \vspace{-.3in}
    \caption{Convolutional coordinates $w_1$,$w_2$,$w_3$,$w_5$,$w_7$, and $w_{10}$ visualized on the Lorenz attractor.}
    \label{fig:lorenzconvcoords}
\end{figure}

It has been shown that chaotic systems do not admit exact finite-dimensional linear representations, as they possess mixed or purely continuous Koopman spectra \cite{Mezic2005nd}. In these cases, the Koopman operator may only have trivial (constant) eigenfunctions \cite{koopmanvonneumann1932}. Nevertheless, there is interest in obtaining an approximate linearization that, while not globally accurate, captures some essential features of the dynamics.

Brunton et. al. studied this question in \cite{bruntonproctorkaiserkutz2017} using the methodology of time-delay embedding and principal components. They computed the Hankel principal components for a number of chaotic systems and estimated a linear system by applying DMD to these coordinates. While the resulting linear models were not closed and did not approximately represent the dynamics by themselves, they found that they were able to approximately close these models to very high accuracy by taking the highest-order convolutional coordinate as a random exogenous input to the model and simulating the other coordinates based on the derived linear dependencies. The authors of this paper were unable to explain the apparent success of this method theoretically. However, the results in this paper provide a natural justification for these results, as well as a number of other striking features of their models.

We first reproduce their results in the case of the Lorenz system. The Lorenz system 
\begin{subequations}
	\begin{align}
	\dot{x}_1 &= 10(x_2-x_1)\\
	\dot{x}_2 &= x_1(28-x_3)-x_2\\
	\dot{x}_3 &= x_1x_2-8/3 x_3 
	\end{align}
\end{subequations}
is sampled for 100 time units with a time step $\Delta t = 0.001$. A Hankel matrix is constructed from the sampled trajectory with delay dimension 100. The SVD of the Hankel matrix yields the $\mathbf{U}, \mathbf{S}$ and $\mathbf{V}$ matrices. The trajectory in convolutional coordinates is then estimated as $\mathbf{S}\mathbf{V}^\dagger$. These results are plotted in figure \ref{fig:lorenzconvcoords}.

Instead of applying DMD to the principal components, the model is derived analytically from the $\mathbf{U}$ basis vectors. After normalization, this model matches the model derived in \cite{bruntonproctorkaiserkutz2017} with very high accuracy, both in the Frobenius norm and in the spectral norm. The structure of these models are illustrated in figure \ref{fig:matfig}. The normalized SVD coordinate operator is nearly antisymmetric, as predicted by theorem \ref{thm:SVDantisymmetric}. Interestingly, only the first off-diagonals are nearly nonzero, indicating that coefficients $\frac{\sigma_k}{\sigma_j} \langle u_j, u_k'\rangle$ drop off quickly with $j > k$. This fact is related to the relative growth rates of the singular values and the coefficients $\mathcal{A}_{jk}$. As conjectured previously, the decay rate of $\sigma_j$ is much stronger than the growth of $\mathcal{A}_{jk}$. This particular structure means that the derivative $\dot{v}_j$ can be well-approximated by $\mathbf{A}_{j(j-1)} v_{j-1} + \mathbf{A}_{j(j+1)} v_{jj+1}$. This dependency explains the {\em forcing} behavior observed in~\cite{bruntonproctorkaiserkutz2017}, in which the dynamics of the first $r$ coordinates could be modeled by a linear model on the first $r$ coordinates plus a linear forcing vector depending on the $r + 1$ coordinate.

It is instructive to compare the SVD coordinate model with the model predicted for Legendre convolutional coordinates, which are very close to the SVD coordinates \cite{Gibson1992phD}. The model for the Legendre coordinates is computed using the formulae derived in Appendix~\ref{ap:Legendre Basis}. Prior to normalization, the SVD coordinate and Legendre coordinate operators appear nearly identical, although the SVD coordinate one has a small negative lower-triangular component. After normalization, however, the matrices appear very different. The Legendre coordinate model matches the upper triangular component of the SVD model, but has no corresponding subdiagonal component. This results in a highly unstable model. The negative subdiagonal component of the SVD model stabilizes the model and gives it an imaginary spectrum.


\begin{figure}
    \centering
    \includegraphics[width=0.8\textwidth]{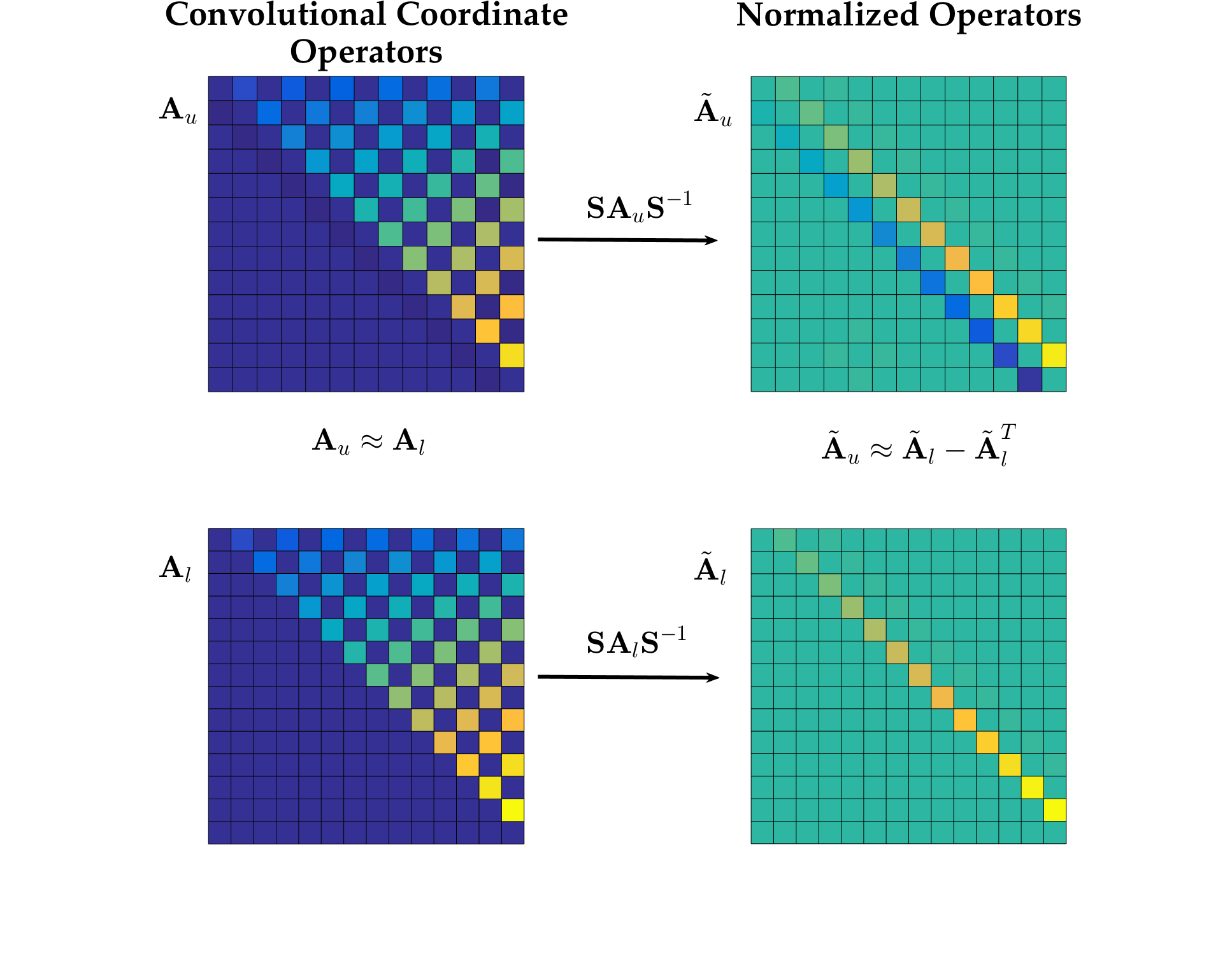}
    \vspace{-.3in}
    \caption{
    	Relationship between operators for the Lorenz system:
    	The linear model $\mathbf{A}_u$ is estimated in the SVD coordinates. The normalized linear model is given by $\tilde{\mathbf{A}}_u=\mathbf{S}^{-1}\mathbf{A}_u\mathbf{S}$. 
    	The SVD-coordinate model is similar to the predicted linear model $\mathbf{A}_l$ in Legendre coordinates.
    	In contrast, the normalized model $\tilde{\mathbf{A}}_l = \mathbf{S}^{-1}\mathbf{A}_l\mathbf{S}$ differs from $\tilde{\mathbf{A}}_u$, revealing how these operators encode very different dynamics.
    	The coefficients of these operators appear to grow linearly in $i$ and $j$. This connection explains the structure of the models derived in \cite{bruntonproctorkaiserkutz2017}.}
    \label{fig:matfig}
\end{figure}

\section{Summary and Discussion}

In this paper, we studied the properties of \textit{convolutional coordinates} on dynamical systems, constructed by convolving a basis of filters with the trajectories of the dynamical systems. We show that these coordinates naturally linearize system dynamics. For a given choice of basis used to the construct the convolutional coordinates, the representation of the Koopman operator in these coordinates is shown to be system independent. We derive these representations in terms of the basis functions and their derivatives. We consider the question of obtaining a good choice of basis for finite-dimensional approximations to the Koopman operator in convolutional coordinates. The eigenvectors of the autocorrelation function, or equivalently the singular vectors of the Hankel matrix, have several properties that make them attractive for this purpose, including that the analytically derived linear relations between these coordinates matches the approximations produced by applying DMD to these coordinates. We observe that these coordinates are also optimally parsimonious for finite discrete spectrum systems, and conjecture that restrictive error bounds exist for broader classes of systems. We validated our theoretical observations on a number of test systems, and found that these coordinates can excellently reconstruct the dynamics of these systems. We elaborate on the structure of the HAVOK models introduced by Brunton et. al. in  \cite{bruntonproctorkaiserkutz2017} by noticing a connection with the Legendre polynomials and SVD coordinates.

This work suggest many interesting directions for future research. While we have conjectured that SVD convolutional coordinates have good error bounds for a wide class of systems, more work is needed to fully characterize the quality of these approximations. Since these observables have a very precise structure and many restrictive properties, it is expected that this analysis will be more fruitful than past analyses of generic families of observable functions. Further study is also needed of the effects of various parameters on these approximations, including the embedding dimension and the state dimension. 
It is also expected that these observables will provide good quality Koopman approximations for a wide variety of systems, and we hope that these methods will be applied to the large range of dynamical systems where Koopman spectral information is desired. 
Because of the considerable promise of leveraging Koopman linear representations for the control of nonlinear systems~\cite{KordaMezic2018,Kaiser2017arxiv,PeitzKlus2019Auto}, it will be interesting to combine the delay coordinate representations with control approaches.

\section*{Acknowledgements} 
SLB and JNK acknowledge funding support from the Defense Advanced Research Projects Agency (DARPA-PA-18-01-FP-125).  SLB and EK acknowledge funding support from the Army Research Office (ARO W911NF-17-1-0306). 
EK gratefully acknowledges support by the ``Washington Research Foundation Fund for Innovation in Data-Intensive Discovery" and a Data Science Environments project award from the Gordon and Betty Moore Foundation (Award \#2013-10-29) and the Alfred P. Sloan Foundation (Award \#3835).
JNK acknowledges support from the Air Force Office of Scientific Research (AFOSR) grant FA9550-17-1-0329.

\begin{appendices}
\section{Delay Embedded Dynamics in Fourier and Legendre Bases}\label{ap:Legendre Basis}

We can derive the form of the coefficients $\mathbf{A}_{jk}$ for some example bases. The simplest such basis is the Fourier basis $\{e^{\pi int/\tau}\}$. The coefficients in this basis have the form
\begin{align*}
    \mathbf{A}_{jk} &= \frac{\pi i k}{\tau} \int_{-\tau}^\tau e^{\pi i (k - j) p / \tau}\,dp
    = 2 \pi i k \delta_{jk}.
\end{align*}

We can also consider these coefficients for a basis of orthogonal polynomials over $[-\tau,\tau]$. The foundation of our construction will be the Legendre polynomials, which are given by
\begin{equation}
    P_l(x) = \frac{1}{2^l} \sum_{k = 0}^{\text{floor}(l/2)} (-1)^k {l\choose k} {{2l - 2k}\choose l}x^{l-2k}.
\end{equation}
The Legendre polynomials have some useful properties. In particular, they are alternately even an odd, and the $n$th Legendre polynomial is orthogonal to the first $n - 1$ monomials $x^0, \cdots, x^{n-1}$.

Instead of working with these polynomials directly, we will work with a rescaled Legendre basis, that is orthonormal on $[-\tau,\tau]$. We define these functions as
\begin{align*}
    \phi_l(x) &= \frac{P_l(x/\tau)}{\sqrt{\tau}\Vert {P_l} \Vert} = \frac{P_l(x/\tau)}{\sqrt{\tau}}\sqrt{\frac{2l+1}{2}}\\
    &= \frac{1}{2^l}\sqrt{\frac{2l+1}{2\tau}} \sum_{k = 0}^{\text{floor}(l/2)} (-1)^k {l\choose k} {{2l - 2k}\choose l}\frac{x^{l-2k}}{\tau^{l-2k}}.
\end{align*}
For convenience we will abbreviate this expansion as
\begin{align*}
    \phi_l(x) = C_l \sum_{k = 0}^{\text{floor}(l/2)} B_{lk} x^{l - 2k}.
\end{align*}
The derivatives of these functions are given by
\begin{align*}
    \phi_l'(x)
    &= C_l \sum_{k = 0}^{\text{floor}((l-1)/2)} B_{lk} (l - 2k) x^{l - 2k - 1}.
\end{align*}
The coefficients are given by
\begin{align*}
    \mathbf{A}_{jk} &= \int_{-\tau}^\tau \phi_j(p)\bigg(C_k \sum_{n = 0}^{\text{floor}((k-1)/2)} (k - 2n) B_{kn}p^{k-2n-1} \bigg)\,dp.
\end{align*}
Note that the monomial exponents $k - 2n - 1$ reach at most $k - 1$. Since $\phi_j$ is orthogonal to the first $j - 1$ monomials, if $j \geq k$, $\mathbf{A}_{jk} = 0$.
For $j < k$, this simplifies to 
\begin{align*}
    \mathbf{A}_{jk} &= \int_{-\tau}^{\tau} \phi_j(p) \bigg(C_k \sum_{n = 0}^{\text{floor}((j + 1 - k)/2)} (k - 2n) B_{kn} p^{k - 2n - 1} \bigg)\,dp\\
    &= C_k \sum_{n = 0}^{\text{floor}((j + 1 - k)/2)} \bigg((k - 2n) B_kn \int_{-\tau}^\tau \phi_j(p) p^{k - 2n - 1}\,dp\bigg).
\end{align*}
The inner product between $\phi_j(p)$ and $p^{k-2n-1}$ is given by
\begin{align*}
    \int_{-\tau}^\tau \phi_j(p)p^{k - 2n - 1}\,dp &= \int_{-\tau}^\tau C_j \sum_{m = 0}^{\text{floor}(j/2)} B_{jm}p^{(j + k) - 2(m + n) - 1} \,dp\\
    &= C_j \sum_{m = 0}^{\text{floor}(j/2)} \frac{B_{jm}}{(j + k) - 2(m + n) - 1} p^{(j + k) - 2(m + n)} \bigg|_{-\tau}^\tau.
\end{align*}
Evaluating this gives
\begin{align*}
    \int_{-\tau}^\tau \phi_j(p)p^{k - 2n - 1}\,dp &= \begin{cases}
    2C_j \sum_{m = 0}^{\text{floor}(j/2)} \frac{B_{jm}}{(j + k) - 2(m + n) - 1} \tau^{(j + k) - 2(m + n)}&\text{($j + k)$ odd}\\
    0 &\text{$(j + k)$ even}.
    \end{cases}
\end{align*}
These can be substituted into our expression for $\mathbf{A}_{jk}$ to obtain the full Legendre coordinates.

\section{Fast Computation of Singular Vector Observables from the Autocorrelation Function}\label{ap:autocorrelation_approximation}

The naive approach to computing these observables from data requires computing the SVD of an $N \times m$ Hankel matrix, where $m$ is the number of snapshots and $N$ is the length of the delay embedding. The cost of this computation is $O(N^2m + m^2N + \min(N^3,m^3))$. In order to get effective approximations to smooth coordinates, we often need to take $N$ large. At first glance, it may seem like the apparent advantage of having a parsimonious basis of observables is mitigated by this large up front computational cost. 

To avoid this, we begin with the insight that we do not need to compute the full SVD, since we are only interested in computing the basis functions $\mathbf{U}$. These can be obtained by diagonalizing the autocovariance matrix:
\begin{equation}\label{eq:matrixautocorrelation}
\mathbf{A} = \mathbf{H}\mathbf{H}^T = \mathbf{U}\mathbf{S}^2\mathbf{U}^\dagger.
\end{equation}
Naively this procedure requires $N^2$ inner products of length-$m$ vectors to multiply to the two Hankel matrices. However, we can reduce this by using an analytic approximation of the autocovariance function. The autocovariance matrix is a discrete sampling of the autocovariance function, which is given by
\begin{equation}\label{eq:functionautocorrelation}
A(p,q) = \lim_{T\to\infty} \frac{1}{2T}\int_{-T}^{T} x(t+p)x(t+q)\,dt = \overline{x(t + p) x(t + q)}.
\end{equation}
The autocovariance is translationally invariant, i.e. $A(p,q) = A(p + t,q + t)$. We can therefore rewrite the autocorrelation as follows:
\begin{equation}
A(p,q) = \overline{x(t) x(t + (p - q))}.
\end{equation}
We can expand this in $(p-q)$ using a Taylor series approximation:
\begin{subequations}
	\begin{align}
	A(p,q) &= \lim_{T\to\infty} \frac{1}{2T}\int_{-T}^{T} x(t) \sum_{n = 0}^\infty \frac{x^{(n)}(t)(p-q)^n}{n!}\,dt\\
	&= \sum_{n=0}^\infty \frac{(p - q)^n}{n!} \overline{x(t) x^{(n)}(t)}. \label{eq:autocorrelTaylor}
	\end{align}
\end{subequations}
If we truncate this Taylor series at some value $n = n_{max}$, we only need to compute $n_{max}$ inner products of length $m$ instead of $N^2$. This represents a significant cost savings over the naive SVD

This approach generalizes when the signal $\mathbf{x}$ is multivariate. In this case, the autocorrelation is a tensor function:
\begin{equation}\label{eq:multivariateautocorrelation}
\mathbf{A}(p,q) = \overline{\mathbf{x}(t+p)\mathbf{x}^*(t+q)}.
\end{equation}
We can obtain a similar Taylor expansion for each component of the autocorrelation separately:
\begin{equation}
A_{jk}(p,q) = \sum_{n=0}^\infty \frac{(p - q)^n}{n!} \overline{ x_j(t) x_k^{(n)}(t)}.
\end{equation}
For a state with dimension $k$, the number of inner products in this approach is $k^2 n_{max}$ as opposed to a naive $k^2 N^2$.

\end{appendices}

\bibliographystyle{plain}
\bibliography{references}

\end{document}